\documentclass[3p,12pt]{elsarticle}

\usepackage{latexsym,amssymb,amsmath,amsthm,amsfonts}
\usepackage{graphicx}
\usepackage{epstopdf}
\usepackage{caption}

\theoremstyle{plain}
\newtheorem{theorem}{Theorem}
\newtheorem{lemma}{Lemma}
\newtheorem{assumption}{Assumption}
\newtheorem{definition}{Definition}
\theoremstyle{remark}
\newtheorem{rem}{Remark}
\newtheorem{example}{Example}


\begin{document}

\begin{frontmatter}

\title{\textbf{On the rate of convergence to Rosenblatt-type distribution}\\[5mm]
\small \textit{Short title: Rate  of convergence to Rosenblatt-type distribution}}

\author[qu]{Vo Anh}
\ead{v.anh@qut.edu.au}

\author[ca]{Nikolai Leonenko}
\ead{LeonenkoN@cardiff.ac.uk}

\author[la]{Andriy Olenko\corref{cor1}}
\ead{a.olenko@latrobe.edu.au}

\cortext[cor1]{Corresponding author. Phone: +61-3-9479-2609 \quad  Fax:  +61-3-9479-2466}
\address[qu]{School of Mathematical Sciences, Queensland University of Technology,\\ Brisbane, Queensland, 4001, Australia}
\address[ca]{School of Mathematics, Cardiff University,
Senghennydd Road,\\ Cardiff CF24 4AG, United Kingdom}
\address[la]{Department of Mathematics and Statistics, La Trobe University,\\ Victoria 3086, Australia}

Will appear in the Journal of Mathematical Analysis and Applications. The final publication is available at http://dx.doi.org/10.1016/j.jmaa.2014.12.016

\begin{abstract}
 The main result of the article is the rate of convergence to the Rosenblatt-type distributions in non-central limit theorems. Specifications of the main theorem  are discussed for several scenarios. In particular, special attention is paid to the Cauchy, generalized Linnik's, and local-global distinguisher random processes and fields. 
 Direct analytical methods are used to investigate the rate of convergence in the uniform metric.
\end{abstract}

\begin{keyword}
Rate of convergence  \sep Non-central limit theorems \sep Random field  \sep Long-range dependence \sep Rosenblatt distribution \sep Generalized Linnik's covariance structure.

\MSC 60G60 \sep 60F05 \sep 60G12
\end{keyword}

\end{frontmatter}

\section{Introduction}

This paper studies local functionals of homogeneous random fields with long-range dependence, which appear in various applications in signal processing, geophysics, telecommunications, hydrology, etc. The reader can find more details about long-range dependent processes and fields in  \cite{dou1, gne,iv,leo1,wac} and the references therein. In particular, \cite{dou1} discusses different definitions of long-range dependence in terms of the autocorrelation function (the integral of the correlation function diverges) or the spectrum (the spectral density has a singularity at zero). The case when the summands/integrands are functionals of a long-range dependent Gaussian process is of great importance in the theory of limit theorems for sums/integrals of dependent random variables.  It was shown by Taqqu \cite{ta0,ta2} and Dobrushin and Major \cite{dob} that, comparing with the central limit theorem, long-range dependent summands can produce different normalizing coefficients and non-Gaussian limits.  The volumes \cite{dou1} and \cite{pec} give excellent surveys of the field. For multidimensional results of this type see \cite{iv,leo0,leo1}. Some most recent results can be found  in \cite{iv1,mink,ole2}. 

Despite recent progress in the non-central limit theory  there has been remarkably little fundamental theoretical study on rates of convergence in non-central limit theorems. The rate of convergence to the Gaussian distribution for a local functional of Gaussian random fields with long-range dependence was first obtained in \cite{leo0}. This result was applied to investigate the convergence of random solutions of the multidimensional Burgers equation in~\cite{leo2}. 
The only publications, which are known to the authors, on the rate of convergence to non-Gaussian distributions in the non-central limit theorem are \cite{bre,anh}. These publications investigate particular cases of stochastic processes. The Hermite power variations of a discrete-time fractional Brownian motion were studied in \cite{bre}. The article  \cite{anh} investigated the specific one-dimensional case of the Cauchy stochastic process and some facts used in the paper require corrections. To the best of our knowledge, the rate of convergence has never been studied in the  general  context of non-central limit theorems for non-Gaussian limit distributions. This work was intended as an attempt to obtain first results in this direction.

Our focus in this paper is on fine convergence properties of functionals of long-range dependent Gaussian fields. The paper establishes the rate of convergence in limit theorems for random fields, which is also new for the case of stochastic processes. 
It also generalizes the result of \cite{anh}, which was obtained for  a stochastic process with a fixed Cauchy 
covariance function, to integral functionals of random fields over arbitrary convex sets.  In addition, the paper corrects some proofs in \cite{anh}.  Specific important examples of the Cauchy, generalized Linnik, and local-global distinguisher random processes and fields, which have been recently used to separate a fractal dimension and the Hurst effect \cite{gne}, are considered.

To estimate distances between distributions in the limit theorems for non-linear transformations of Gaussian stochastic processes 
Nourdin and Peccati proposed an approach based on the Malliavin calculus and Stein's method, see \cite{nor,nor1} and the references therein.  The cases of the standard normal distribution and the centred Gamma distribution were considered and the limit theorems for the weakly dependent case were obtained. In \cite{bre} the Malliavin calculus and Stein's method were applied to obtain error bounds for Hermite power variations of a fractional Brownian motion. Central and non-central limit theorems for the Hermite variations of the anisotropic fractional Brownian sheet and the distance between a normal law and another law were studied in \cite{rev} and extended to the multidimensional case in  \cite{bre1}. However, to the best of our knowledge, there are no extensions of these results to  more general classes of covariance functions in the multidimensional case considered in this paper. In contrast we use direct analytical  probability methods to investigate the rate of convergence in the uniform (Kolmogorov) metric of long-range dependent random fields to the Rosenblatt-type distributions.  

The class of Rosenblatt-type distributions is contained in the wide class of  non-Gaussian Hermite distributions, which  can be defined by its representation in the form of multiple Wiener-It\^{o} stochastic integrals with respect to the complex Gaussian white noise random measure. The Rosenblatt distribution is a specific element from this class, which has been widely used recently in the probability theory and also appeared in a statistical context as the asymptotic distribution of certain estimators. There are power series expressions for the characteristic functions of the Rosenblatt distribution.  For a comprehensive exposition of the Rosenblatt distribution and process we refer the reader to \cite{gar,leotau,ta0,ta1,ta3,tud}.
The approach presented in the present paper seems to be suitable even in more general situations of the Hermite limit distributions.

The results were obtained under assumptions similar to the standard ones in \cite{rob} and the references therein. Rather general assumptions were chosen to  describe various asymptotic scenarios for correlation and spectral functions. Some simple sufficient conditions and examples of correlation models satisfying the assumptions are discussed in Sections~\ref{sec4} and~\ref{sec6}.  

As a bonus, some other new results of independent
interest in the paper are: the boundedness of probability densities of the Rosenblatt-type distributions, asymptotics at the origin of the spectral densities of the Cauchy, generalized Linnik, and local-global distinguisher random processes and fields, and the representation of the spectral density of the local-global distinguisher random processes.

The article is organized as follows.  In Section~\ref{sec1} we recall some basic definitions and formulae of the spectral theory of random fields. Section~\ref{sec2} introduces the key assumptions and auxiliary results. The main result is presented in Section~\ref{sec3} and its specifications to various important cases are demonstrated in Section~\ref{sec4}. Discussions and short conclusions are presented in Section~\ref{sec6}.

Some computations in Examples~\ref{ex5} and \ref{ex6} were performed by using Maple 15.0 of Waterloo Maple Inc. and verified by Mathematica  9.0 of Wolfram Research, Inc.

\section{Notations}\label{sec1}

In what follows $\left| \cdot \right| $ and $\left\| \cdot \right\| $ denote the Lebesgue measure and the Euclidean distance in~$\mathbb{R}^{d},$ respectively. We use the symbols $C$ and $\delta$ to denote constants which are not important for our exposition. Moreover, the same symbol  may be used for different constants appearing in the same proof.

We consider a measurable mean square continuous zero-mean homogeneous  isotropic real-valued
random field  $\eta(x),\ x\in \mathbb{R}^{d},$ defined on a probability space $(\Omega,\mathcal{F},P),$ 
with the covariance function
\[
\textrm{B}(r):=\mathbf{Cov}\left( \eta(x),\eta(y)\right) =\int_{0}^{\infty }Y_{d}(rz)\,\mathrm{d}\Phi(z),\ x,y\in \mathbb{R}^{d},
\]
where $r:= \left\| x-y\right\|,$ $\Phi(\cdot)$ is the isotropic spectral measure,  the function $Y_{d}(\cdot)$ is defined by
\[Y_{d}(z):=2^{(d-2)/2}\Gamma \left(\frac{d}{2}\right)\ J_{(d-2)/2}(z)\ z^{(2-d)/2},\quad
z\geq 0,
\]
$J_{(d-2)/2}(\cdot)$ is the Bessel function of the first kind of order $(d-2)/2.$

\begin{definition}  The random field $\eta (x),$ $x\in \mathbb{R}^{d},$ as defined above is said to possess an absolutely continuous spectrum if   there exists a function $f(\cdot)$ such that
$$
\Phi(z)=2\pi^{d/2}\Gamma^{-1}\left(d/2\right)\int_0^z u^{d-1}f(u)\,\mathrm{d}u,\quad z\ge 0,\quad u^{d-1}f(u)\in L_{1}(\mathbb{R}_{+}).
$$
 The function $f(\cdot)$ is
called the isotropic spectral density function of the field $\eta (x).$
\end{definition}
The field $\eta (x)$ with an absolutely continuous spectrum has the isonormal spectral representation
\[
\eta (x)=\int_{\mathbb{R}^d}e^{i(\lambda ,x)}\sqrt{f\left( \left\| \lambda \right\|
\right) }W(\mathrm{d}\lambda ),
\]
where $W(\cdot )$ is the complex Gaussian white noise random measure on $\mathbb{R}^d.$

Consider a Jordan-measurable convex bounded set $\Delta \subset \mathbb{R}^{d},$ such that
 $\left| \Delta \right| >0$ and $\Delta$ contains the origin in its interior. Let $\Delta (r),r>0,$ be the homothetic image of
the set $\Delta,$ with the centre of homothety at the origin and the coefficient $r>0,$
that is $\left| \Delta (r)\right| =r^{d}\left| \Delta \right|.$

Consider the uniform distribution on $\Delta (r)$
with the probability density function
(pdf)  $r^{-d}\left|\Delta \right|^{-1}  \chi_{\raisebox{-3pt}{\scriptsize $\Delta(r)$}}(x),$ $x\in\mathbb{R}^{d},$
where $\chi_{\raisebox{-3pt}{\scriptsize $A$}}(\cdot)$ is  the indicator function of  a set $A.$

\begin{definition} Let $U$ and $V$ be two random vectors which are independent and uniformly distributed inside the set $\Delta (r).$ We denote by $\psi _{\Delta (r)}(z
),$ $z \geq 0,$ the pdf of the distance $\left\| U-V\right\| $ between $U$ and $V.$
\end{definition}
  Note that $\psi _{\Delta (r)}(z
)=0$ if $z > diam\left\{ \Delta (r)
\right\}.$ Using the above notations, we obtain the  representation
\[
\int_{\Delta (r)}\int_{\Delta (r)}\varUpsilon(\left\| x-y\right\| )\,\mathrm{d}x
\,\mathrm{d}y=
\left| \Delta \right| ^{2}r^{2d}\mathbf{E}\ \varUpsilon(\left\| U-V\right\| )=
\]
\begin{equation}\label{dint}
=\left| \Delta \right| ^{2}r^{2d}\int_{0}^{diam\left\{ \Delta
(r)\right\} }\varUpsilon(z)\ \psi _{\Delta (r)}(z)\,\mathrm{d}z ,
\end{equation}
where $\varUpsilon(\cdot)$ is an integrable Borel function.

\begin{rem} If $\Delta (r)$ is the ball $v(r):=\{x\in \mathbb{R
}^{d}:\left\| x\right\| <r\},$ then

\[
\psi _{v(r)}(z)=d\,r^{-d} z^{d-1}I_{1-(z/2r)^{2}}\left( \frac{d+1}{2}
,\frac{1}{2}\right) ,\quad 0\leq z \leq 2r,  \]
where
\[
I_{\mu }(p,q):=\frac{\Gamma (p+q)}{\Gamma (p)\ \Gamma (q)}\int_{0}^{
\mu }u^{p-1}(1-u)^{q-1}\,\mathrm{d}u,\quad \mu \in ( 0,1],\quad p>0,\ q>0 ,
\]
is the incomplete beta function, see \cite{iv}.
\end{rem}

\begin{rem} Let $H_{k}(u)$, $k\geq 0$, $u\in \mathbb{R}$, be the Hermite polynomials, see \cite{pec}. If $(\xi _{1},\ldots ,\xi _{2p})$ is $2p$-dimensional
zero mean Gaussian vector with
\[
\mathbf{E}\xi _{j}\xi _{k} =
\begin{cases} 1, &\mbox{if } k=j; \\
r_{j}, & \mbox{if } k=j+p\ \mbox{and } 1\leq j\leq p,\\
0, &\mbox{otherwise,}
 \end{cases}
\]
then
\[
\mathbf{E}\ \prod_{j=1}^{p}H_{k_{j}}(\xi _{j})H_{m_{j}}(\xi
_{j+p})=\prod_{j=1}^{p}\delta _{k_{j}}^{m_{j}}\ k_{j}!\ r_{j}^{k_{j}}.
\]
\end{rem}

The Hermite polynomials form a complete orthogonal system
in the Hilbert space
\[
{L}_{2}(\mathbb{R},\phi (w )\, dw) =\left\{G:\int_{
\mathbb{R}}G^{2}(w) \phi ( w )\,\mathrm{d}w<\infty \right\}, \quad \phi
(w):=e^{-\frac{w^{2}}{2}}/\sqrt{2\pi }.
\]

An arbitrary function $G(w)\in {L}_{2}(\mathbb{R},\phi(w )\, dw)$ admits the mean-square convergent expansion
\begin{equation}\label{herm}
G(w)=\sum_{j=0}^{\infty }\frac{C_{j}H_{j}(w)
}{j !}, \qquad C_{j }:=\int_{\mathbb{R}}G(w)H_{j
}(w)\phi ( w )\,\mathrm{d}w.  \end{equation}

By Parseval's identity
\begin{equation}\label{par}
\sum_{j=0}^\infty\frac{C_{j}^{2}}{j !}
=\int_{\mathbb{R}}G^2(w) \phi ( w )\,\mathrm{d}w.
\end{equation}

\begin{definition} {\rm\cite{ta0}}  Let $G(w)\in {L}_{2}(\mathbb{R},\phi ( w)\,
dw)$ and assume there exists an integer $\kappa\geq 1$ such that $C_{j }=0$, for all $0\leq j\leq \kappa-1,$ but $C_{\kappa }\neq 0.$  Then $\kappa$ is called the Hermite rank of $G(\cdot)$ and denoted by
$H \mbox{rank}\,G.$
\end{definition}

\begin{definition} {\rm\cite{bin}} A measurable function $L:(0,\infty )\rightarrow (0,\infty )$ is
called slowly varying at infinity if for all $t >0,$%
\begin{equation*}
\lim\limits_{\lambda\to \infty }\frac{L(\lambda t)}{L(\lambda)}=1.
\end{equation*}
\end{definition}

 By the representation theorem~\cite[Theorem 1.3.1]{bin}, there exists $C > 0$ such that for all $r \ge C$ the function $L(\cdot)$ can be written in the form
\begin{equation}\label{L}
L(r) = \exp \left(\zeta_1(r) + \int_C^r \frac{\zeta_2(u)}{u}\,\mathrm{d}u \right), \end{equation}
where $\zeta_1(\cdot)$ and $\zeta_2(\cdot)$ are such measurable and bounded functions that $\zeta_2(r)\to 0$ and $\zeta_1(r)\to C_0$ $(|C_0|<\infty),$   when $r\to \infty.$

If $L(\cdot)$ varies slowly and $a>0$ then  $
r^a L(r)\to \infty,$ $r^{-a} L(r)\to 0,$ when $r\to \infty,$ see Proposition 1.3.6 \cite{bin}.

\section{Assumptions and auxiliary results}\label{sec2}

In this section we list the main assumptions and some auxiliary results from \cite{mink} which will be used to obtain the rate of convergence in non-central limit theorems. The detailed discussion of the main assumptions is given in Section~\ref{sec6}. We also prove the boundedness of the pdf of the Rosenblatt-type distributions.

\begin{assumption}\label{ass1} { Let  $\eta (x),$ $x\in \mathbb{R}^{d}$, be a homogeneous isotropic Gaussian random
field with $\mathbf{E}\eta (x)=0$ and the covariance function $B(x)$ such that

\[B(0)=1,\quad B(x)=\mathbf{E}\eta (0) \eta (x)= \left\Vert x\right\Vert
^{-\alpha }L(\left\Vert x\right\Vert ),
\]
where $L(\left\Vert \cdot\right\Vert )$ is a function slowly varying at infinity.}
\end{assumption}

In this paper we restrict our consideration  to $\alpha \in (0,d/\kappa),$  where $\kappa$ is the Hermite rank in Definition 3.  For such $\alpha$ the covariance function $B(x)$ satisfying Assumption~\ref{ass1} is not integrable, which corresponds to the case of long-range dependence. 

Let us denote
\[
K_r :=\int_{\Delta(r)}G(\eta (x))\,\mathrm{d}x \quad \mbox{and}
\quad K_{r,\kappa} :=\frac{C_\kappa}{\kappa !}
\int_{\Delta(r)}H_\kappa (\eta (x))\,\mathrm{d}x,
\]
where $C_\kappa$ is defined by (\ref{herm}).

\begin{theorem}\label{th4}{\rm\cite{mink}}
Suppose that $\eta (x),$ $x\in\mathbb{R}^d,$ satisfies Assumption~{\rm\ref{ass1}}  and    $H {\rm rank}\,G(\cdot)=\kappa\ge 1.$
If there exists the limit distribution for at least one of the
random variables
\[
\frac{K_r}{\sqrt{ \mathbf{Var} K_r}}\quad \mbox{and}\quad  \frac{K_{r,\kappa}}{\sqrt{ \mathbf{Var} \ K_{r,\kappa}}},\]
then the limit distribution of the other random variable exists too and the limit distributions coincide when $r\to \infty.$
\end{theorem}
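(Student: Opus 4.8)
The plan is to expand $G$ in Hermite polynomials, isolate the leading term $K_{r,\kappa}$, show that the remaining terms carry a negligible variance, and then conclude by a converging-together argument. Since $G\in L_2(\mathbb{R},\phi(w)\,\mathrm{d}w)$ has Hermite rank $\kappa$, the expansion (\ref{herm}) gives $G(\eta(x))=\sum_{j\ge\kappa}\frac{C_j}{j!}H_j(\eta(x))$ in $L_2(\Omega)$; since $\mathbf{E}\int_{\Delta(r)}G^2(\eta(x))\,\mathrm{d}x=|\Delta(r)|\sum_{j\ge\kappa}C_j^2/j!<\infty$ by (\ref{par}), a Fubini/Cauchy--Schwarz argument permits term-by-term integration, so $K_r=\sum_{j\ge\kappa}K_{r,j}$ in $L_2(\Omega)$ with $K_{r,j}:=\frac{C_j}{j!}\int_{\Delta(r)}H_j(\eta(x))\,\mathrm{d}x$. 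Set $R_r:=K_r-K_{r,\kappa}=\sum_{j>\kappa}K_{r,j}$. By the moment identity for Hermite polynomials of jointly Gaussian variables recorded above (case $p=1$, $\xi_1=\eta(x)$, $\xi_2=\eta(y)$, $r_1=B(\|x-y\|)$), $\mathbf{E}\big[H_j(\eta(x))H_k(\eta(y))\big]=\delta_j^k\,j!\,B^j(\|x-y\|)$; hence the $K_{r,j}$ are pairwise uncorrelated, $\mathbf{Var}\,K_{r,j}=\frac{C_j^2}{j!}\int_{\Delta(r)}\int_{\Delta(r)}B^j(\|x-y\|)\,\mathrm{d}x\,\mathrm{d}y$, and $\mathbf{Var}\,K_r=\mathbf{Var}\,K_{r,\kappa}+\mathbf{Var}\,R_r$. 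Everything then reduces to the single estimate
\[
\mathbf{Var}\,R_r=o\big(\mathbf{Var}\,K_{r,\kappa}\big),\qquad r\to\infty .
\]
Indeed, from it $\mathbf{Var}\,K_r/\mathbf{Var}\,K_{r,\kappa}\to1$ and $R_r/\sqrt{\mathbf{Var}\,K_{r,\kappa}}\to0$ in $L_2(\Omega)$, so $\frac{K_r}{\sqrt{\mathbf{Var}\,K_r}}-\frac{K_{r,\kappa}}{\sqrt{\mathbf{Var}\,K_{r,\kappa}}}\to0$ in probability (write the difference as a variance-one random variable times $o(1)$ plus $R_r/\sqrt{\mathbf{Var}\,K_r}$), and the converging-together lemma yields that one normalized variable converges in distribution if and only if the other does, the limits then coinciding.

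For a lower bound on $\mathbf{Var}\,K_{r,\kappa}$, apply (\ref{dint}) with $\varUpsilon=B^\kappa$ and substitute $x=ru$, $y=rv$ to get, using Assumption~\ref{ass1} and $L>0$,
\[
\mathbf{Var}\,K_{r,\kappa}=\frac{C_\kappa^2}{\kappa!}\,r^{2d}\int_\Delta\int_\Delta B^\kappa(r\|u-v\|)\,\mathrm{d}u\,\mathrm{d}v
=\frac{C_\kappa^2}{\kappa!}\,r^{2d-\alpha\kappa}\int_\Delta\int_\Delta\frac{L^\kappa(r\|u-v\|)}{\|u-v\|^{\alpha\kappa}}\,\mathrm{d}u\,\mathrm{d}v .
\]
For fixed $u\ne v$ the integrand tends to $\|u-v\|^{-\alpha\kappa}$ by slow variation of $L$, and since $\alpha\kappa<d$ the function $\|u-v\|^{-\alpha\kappa}$ is integrable over the bounded set $\Delta\times\Delta$; Fatou's lemma then gives $\liminf_{r\to\infty}\mathbf{Var}\,K_{r,\kappa}\,/\,\big(r^{2d-\alpha\kappa}L^\kappa(r)\big)\ge\frac{C_\kappa^2}{\kappa!}\int_\Delta\int_\Delta\|u-v\|^{-\alpha\kappa}\,\mathrm{d}u\,\mathrm{d}v>0$, so $\mathbf{Var}\,K_{r,\kappa}\ge c\,r^{2d-\alpha\kappa}L^\kappa(r)$ for all large $r$, with some $c>0$.

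For $\mathbf{Var}\,R_r$, note $|B(z)|\le B(0)=1$ by Cauchy--Schwarz, so $B^j(z)\le|B(z)|^{\kappa+1}$ for every $j\ge\kappa+1$; together with $\sum_{j>\kappa}C_j^2/j!\le\sum_{j\ge0}C_j^2/j!<\infty$ (by (\ref{par})) this gives
\[
\mathbf{Var}\,R_r\le C\int_{\Delta(r)}\int_{\Delta(r)}|B(\|x-y\|)|^{\kappa+1}\,\mathrm{d}x\,\mathrm{d}y
\le C\,r^{d}\int_0^{C r}|B(t)|^{\kappa+1}t^{d-1}\,\mathrm{d}t ,
\]
the last bound estimating one integration by $|\Delta(r)|$ and passing to polar coordinates, $C r$ standing for $\mathrm{diam}\{\Delta(r)\}$. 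Splitting the $t$-integral at a fixed point and writing $|B(t)|^{\kappa+1}t^{d-1}=t^{d-1-\alpha(\kappa+1)}L^{\kappa+1}(t)$, Karamata's theorem (see \cite{bin}) treats the regimes $\alpha(\kappa+1)<d$, $=d$, $>d$ and gives $\int_0^{Cr}|B(t)|^{\kappa+1}t^{d-1}\,\mathrm{d}t=O\big(r^{\max(d-\alpha(\kappa+1),\,0)}L_*(r)\big)$ for a function $L_*$ slowly varying at infinity. Hence $\mathbf{Var}\,R_r=O\big(r^{\gamma}L_*(r)\big)$ with $\gamma:=\max\big(2d-\alpha(\kappa+1),\,d\big)$, and since $\alpha\kappa<d$ forces $\gamma<2d-\alpha\kappa$, the lower bound above yields $\mathbf{Var}\,R_r/\mathbf{Var}\,K_{r,\kappa}=O\big(r^{\gamma-(2d-\alpha\kappa)}L_*(r)/L^\kappa(r)\big)\to0$, because a negative power of $r$ times a function slowly varying at infinity tends to $0$. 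This is the estimate required in the first paragraph, and the theorem follows.

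I expect the variance analysis of the last two paragraphs to be the main obstacle: one has to pin down the exact order $r^{2d-\alpha\kappa}L^\kappa(r)$ of $\mathbf{Var}\,K_{r,\kappa}$ and a strictly smaller order for $\mathbf{Var}\,R_r$, and this rests entirely on regular-variation machinery (the slow-variation limit, Karamata's theorem, and, if a two-sided asymptotic for $\mathbf{Var}\,K_{r,\kappa}$ is wanted instead of the Fatou bound used here, Potter's bounds). The genuinely delicate point is the behaviour near the diagonal $x=y$, where $r\|u-v\|$ need not be large and $B(z)=z^{-\alpha}L(z)$ loses its slowly varying interpretation; there one controls the integrand crudely by $|B|\le1$ on a shrinking neighbourhood whose contribution is $O(r^{\alpha\kappa-d}/L^\kappa(r))=o(1)$. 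The hypothesis $\alpha\kappa<d$ enters repeatedly: it makes $\int_\Delta\int_\Delta\|u-v\|^{-\alpha\kappa}\,\mathrm{d}u\,\mathrm{d}v$ finite and positive and it forces $\gamma<2d-\alpha\kappa$. Everything else — the Hermite expansion and orthogonality, Parseval's identity (\ref{par}), and the concluding converging-together step — is routine.
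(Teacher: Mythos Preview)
The paper does not prove Theorem~\ref{th4}; it is quoted as an auxiliary result from \cite{mink}, so there is no in-paper proof to compare against. Your argument is correct and is the classical reduction-theorem proof in the spirit of Taqqu and Dobrushin--Major: Hermite orthogonality yields the variance decomposition $\mathbf{Var}\,K_r=\mathbf{Var}\,K_{r,\kappa}+\mathbf{Var}\,R_r$; Fatou's lemma with slow variation of $L$ gives the lower bound $\mathbf{Var}\,K_{r,\kappa}\gtrsim r^{2d-\alpha\kappa}L^\kappa(r)$ (positivity of the integrand is guaranteed since Assumption~\ref{ass1} writes $B$ as a positive function); the crude bound $|B|^j\le|B|^{\kappa+1}$ for $j\ge\kappa+1$ together with Karamata handles $\mathbf{Var}\,R_r$, and the case split on the sign of $d-\alpha(\kappa+1)$ is the right way to organize it; finally, tightness of a variance-one family (Chebyshev) makes the $o(1)\cdot O_P(1)$ step in the converging-together argument legitimate. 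The paper itself uses exactly this decomposition $K_r=K_{r,\kappa}+S_r$ and the same Hermite-orthogonality variance computation in the proof of Theorem~\ref{rate} (see the estimate of $\mathbf{Var}\,S_r$ there), so your approach is fully aligned with the machinery the paper relies on.
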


\begin{rem} By the property
$\lim_{r\to \infty }{\mathbf{Var}\, K_{r}}/{\mathbf{Var}\, K_{r,\kappa}} = 1
$ (see \cite{mink})
Theorem~\ref{th4} holds if the first random variable is replaced by
${K_r}/{\sqrt{ \mathbf{Var} \ K_{r,\kappa}}}.$
\end{rem}

\begin{assumption}\label{ass2} The random field $\eta(x),$ $x \in \mathbb{R}^d,$ has the spectral density

\begin{equation}\label{f}
f(\left\| \lambda \right\| )=  c_2(d,\alpha )\left\| \lambda \right\| ^{\alpha
-d}L\left( \frac 1{\left\| \lambda \right\| }\right)+\varepsilon(\left\| \lambda \right\|),
\end{equation}
where
$\varepsilon(t)=t^{\alpha
-d}L\left( 1/t\right)\cdot{\cal O}\left(\min\left(t^\upsilon,1\right)\right),$ as $\max\left(t,1/t\right) \to +\infty,$
 \[
c_2(d,\alpha ):=\frac{\Gamma \left( \frac{d-\alpha }2\right) }{2^\alpha \pi ^{d/2}\Gamma
\left( \frac \alpha 2\right) },\]
and $L(\left\Vert \cdot\right\Vert )$ is a function which is locally bounded slowly varying at infinity and  satisfies for sufficiently large $r$ the condition
\begin{equation}\label{gr}\left|1-\frac{L(tr)}{L(r)}\right|\le
C\,t^{\nu}/r^{q},\ t\ge 1,
\end{equation}
where $\upsilon>0,$ $q>0,$ and $\nu$ are constants.
\end{assumption}

\begin{rem}
For $d=1$ Assumption~\ref{ass2} is similar to the conditions employed in \cite{rob} and the references therein to describe the asymptotic behaviour of the spectral density at zero. For example, the conditions~(\ref{f}) and (\ref{gr})  are the equivalents of Assumptions~3 and 4 in \cite{rob}.  Some sufficient conditions for Assumption~\ref{ass2} are discussed in Remark~\ref{sec6} and  \cite{rob}.
\end{rem}
\begin{rem}
For $d>1$ the situation is more complex.  Under some additional conditions (for example, monotonicity and essential positivity)  Assumptions~\ref{ass1} and~\ref{ass2} are  linked by Abelian and Tauberian theorems. However, they do not imply each other in general, consult  \cite{leoole,ole12}. Thus, to investigate the rate of convergence we need both assumptions.  Moreover, Assumption~\ref{ass2} provides more detailed information about the asymptotic behaviour of the spectral density at zero than one can obtain from the corresponding Tauberian theorem.
\end{rem}

The following Lemma shows that (\ref{gr}) can be replaced by a "stronger" condition.
\begin{lemma}~\label{lem0}
The condition~{\rm(\ref{gr})} is equivalent to \begin{equation}\label{gr0} \left|1-\frac{L(tr)}{L(r)}\right|\le
C/r^{q},\ t\ge 1,\ q>0.
\end{equation}
\end{lemma}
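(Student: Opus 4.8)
The plan is to prove the two implications separately. One direction is immediate: $(\ref{gr0})$ is exactly $(\ref{gr})$ with $\nu=0$, and conversely, if $(\ref{gr})$ holds with some $\nu\le 0$, then $t^{\nu}\le 1$ for $t\ge 1$ and $(\ref{gr0})$ follows at once with the same $C$ and $q$. Hence everything reduces to showing that $(\ref{gr})$ with $\nu>0$ implies $(\ref{gr0})$, i.e.\ that the factor $t^{\nu}$ can be removed.

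To do this I would split the interval $[r,tr]$ into a geometric chain whose length is allowed to grow with $t$. Fix $\rho_0>1$. For $t>1$ (the case $t=1$ being trivial) and $r$ large, put $n:=\lceil \log t/\log\rho_0\rceil\ge 1$, $\rho:=t^{1/n}\in(1,\rho_0]$, and $r_k:=\rho^{k}r$ for $k=0,\dots,n$, so that $r_0=r$, $r_n=tr$ and $r_k\ge r$ for every $k$. Telescoping the ratio,
\[
\frac{L(tr)}{L(r)}=\prod_{k=0}^{n-1}\bigl(1+\theta_k\bigr),\qquad \theta_k:=\frac{L(r_{k+1})}{L(r_k)}-1,
\]
and applying $(\ref{gr})$ with base point $r_k$ (which is large) and ratio $\rho\ge 1$ gives $|\theta_k|\le C\rho^{\nu}r_k^{-q}=C\rho^{\nu}r^{-q}\rho^{-kq}$, whence, summing the geometric series,
\[
\sum_{k=0}^{n-1}|\theta_k|\le \frac{C\rho^{\nu}}{r^{q}}\,\frac{1}{1-\rho^{-q}}.
\]
Using $\bigl|\prod_k(1+\theta_k)-1\bigr|\le \exp\bigl(\sum_k|\theta_k|\bigr)-1\le 2\sum_k|\theta_k|$, valid once $\sum_k|\theta_k|\le\ln 2$ and hence for $r$ large, it only remains to check that the right-hand side of the last display is bounded by a constant multiple of $r^{-q}$ uniformly in $t\ge 1$.

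This is the only real point. Since $\rho\le\rho_0$ we have $\rho^{\nu}\le\rho_0^{\nu}$; and since $n<\log t/\log\rho_0+1$ gives $\log\rho=\log t/n>\log\rho_0/\bigl(1+\log\rho_0/\log t\bigr)$, the ratio $\rho=t^{1/n}$ stays bounded away from $1$ once $t$ is large (say $t\ge\rho_0^{2}$), so that $1/(1-\rho^{-q})$ is bounded there; for $1<t<\rho_0^{2}$ the parameter $t$ is bounded and one simply invokes $(\ref{gr})$ directly. Combining the ranges yields $|1-L(tr)/L(r)|\le C\,r^{-q}$ for all $t\ge 1$ and all sufficiently large $r$, which is $(\ref{gr0})$ with the same exponent $q$. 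The main obstacle is precisely this uniformity in $t$: a chain of fixed length would force the number of factors, and hence the accumulated error, to grow with $t$; letting $n$ grow like $\log t$ keeps $\rho$ in a fixed compact subinterval of $(1,\infty)$, so that $\rho^{\nu}$ and the geometric ratio $\rho^{-q}$ are controlled simultaneously, while every node satisfies $r_k\ge r$, so no decay in $r$ is lost.
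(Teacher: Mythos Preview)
Your argument is correct, and it takes a genuinely different route from the paper. The paper's proof is structural: it observes that (\ref{gr}) implies that for each fixed $t\ge 1$ one has $L(tr)/L(r)-1=\mathcal{O}(r^{-q})$, and then invokes the representation theorem for slowly varying functions with remainder (Corollary~3.12.3 of \cite{bin}) to deduce that $L(r)=C\bigl(1+C(r)\bigr)$ with $C(r)=\mathcal{O}(r^{-q})$. From this global representation the uniform bound (\ref{gr0}) drops out in one line, since $\bigl|1-L(tr)/L(r)\bigr|=\bigl|C(r)-C(tr)\bigr|/\bigl(1+C(r)\bigr)\le C r^{-q}+C(tr)^{-q}$. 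Your proof, by contrast, is entirely elementary and self-contained: the geometric chain with step $\rho\in(\sqrt{\rho_0},\rho_0]$ for large $t$, together with the direct use of (\ref{gr}) on the bounded range $1\le t\le \rho_0^{2}$, extracts the uniform estimate without appealing to any structure theorem. The paper's approach is shorter and yields the additional information that $L$ is asymptotically constant (which is not otherwise used), while yours avoids the external reference and makes the mechanism---that the $t^{\nu}$ factor is compensated by the geometric decay $r_k^{-q}=r^{-q}\rho^{-kq}$ along the chain---completely explicit.
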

\begin{proof}
The condition~(\ref{gr}) implies that, for each $t\ge 1,$ ${L(tr)}/{L(r)}-1={\cal O}\left(r^{-q}\right),$ as $r\to\infty,$ where ${\cal O}\left(r^{-q}\right)$ may be different for different values of $t.$ Notice that $r^{-q}$ has positive decrease because its upper Matuszewska index (refer to Section~2.1.2~\cite{bin})  is $-q<0.$  Then, by the representation theorem for slowly varying functions with remainder, see Corollary 3.12.3~\cite{bin}, 
\begin{equation}\label{Cr}L(r)=C(1+C(r)), \ \mbox{as} \ r\to\infty,\end{equation} 
where $C(r)={\cal O}\left(r^{-q}\right).$ Therefore, by (\ref{Cr})
\[\sup_{t\ge 1}\,\left|1-\frac{L(tr)}{L(r)}\right|=\frac{\sup_{t\ge 1}\left|C\left(r\right)-C\left(tr\right)\right|}{1+C\left(r\right)}\le \frac{C}{r^q}+\sup_{t\ge 1} \frac{C}{(tr)^q}=
{\cal O}\left(r^{-q}\right),\ r\to\infty,
\]
and the condition~(\ref{gr}) can be replaced by (\ref{gr0}).
\end{proof}

\begin{rem}\label{rem5}
An example of a sufficient condition for (\ref{gr}) is that $L(\cdot)$ is differentiable and its derivative satisfies
\begin{equation}\label{logL}
|L'(r)|={\cal O}(L(r)/r^{1+q}),\quad r\to +\infty.
\end{equation}

Indeed,  by Theorem~1.5.3 \cite{bin},  there exist $r_0>0$ and $C>0$ such that for all $r\ge r_0$ it holds
\[\left|1-\frac{L(tr)}{L(r)}\right|\le \left|\frac{r(t-1)\sup_{u\in [r,rt]}L'(u)}{L(r)}\right|\le r(t-1)\sup_{u\in [r,rt]}\left|\frac{L'(u)}{L(u)}\right| \]
\[\times \sup_{u\in [r,rt]}\frac{u^\delta}{r^\delta}\cdot\frac{\sup_{u\in [r,rt]}u^{-\delta}L(u)}{r^{-\delta}L(r)}\le C\,\frac{t^{1+\delta}}{r^q},\quad t\ge 1.\]

Notice, that (\ref{logL}) can be rewritten as $|\ln'(L(r))|={\cal O}(r^{-1-q}),$ as $r\to +\infty.$ Therefore, if the function $\zeta_1(\cdot)$ is differentiable in (\ref{L})  we obtain the sufficient condition
\[\left|\zeta_1'(r) + \frac{\zeta_2(r)}{r}\right|={\cal O}(r^{-1-q}),\quad r\to +\infty.\]

A few simple examples of functions satisfying the condition~(\ref{gr}) for sufficiently large~$r$ are $L(r)=a_0,$  $L(r)=\left(a_0+a_1\,r^{-q}\right)^{a_2},$ and $L(r)=a_0\exp(a_1/r^q),$ where $a_0>0,$ $a_1$ and $a_2$ are constants. The  function $L(r)=\ln r$ does not satisfy the condition~(\ref{gr}).
\end{rem}

\begin{rem}\label{rem6}
Note that Assumption~\ref{ass1} implies $L(t)={\cal O}(t^\alpha),$ $t \to +0.$ Therefore, if Assumption~\ref{ass1} holds true then the condition $\varepsilon(t)=t^{\alpha
-d}L\left( 1/t\right)\cdot{\cal O}\left(\min\left(t^\upsilon,1\right)\right)$ is equivalent to $\varepsilon(t)={\cal O}\left(t^{
-d}\right),$ when $t \to +\infty.$ Hence, the condition (\ref{f}) is equivalent to $f(\left\| \lambda \right\|)={\cal O}(\left\| \lambda \right\|^{-d}),$ when $\left\| \lambda \right\| \to +\infty.$ Thus, if Assumption~\ref{ass1} is fulfilled then for the case $\left\| \lambda \right\| \to +\infty$ one can use $f(\left\| \lambda \right\|)={\cal O}(\left\| \lambda \right\|^{-d})$  instead of the condition (\ref{f}) in Assumption~{\rm\ref{ass2}}.

\end{rem}

Let us denote the Fourier transform of the indicator function of the  set $\Delta$ by
\begin{equation}\label{K}
K_{\Delta}(x):=\int_{\Delta }e^{i(x,u)} \,\mathrm{d}u,\quad
x\in\mathbb{R}^d.
\end{equation}

\begin{lemma}\label{finint} {\rm \cite{mink}}
If $\tau_1,...,\tau_\kappa,$ $\kappa\ge 1,$ are such positive constants that $\sum_{i=1}^\kappa \tau_i <d,$  then
\[
\int_{\mathbb{R}^{d\kappa}}|K_{\Delta}(\lambda _1+\cdots
+\lambda _\kappa)|^2 \frac{\mathrm{d}\lambda _1\ldots \,\mathrm{d}\lambda _\kappa}{\left\| \lambda
_1\right\| ^{d-\tau_1}\cdots \left\| \lambda _\kappa\right\| ^{d-\tau_\kappa}}<\infty .
\]
\end{lemma}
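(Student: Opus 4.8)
The plan is to reduce the $\kappa$-fold integral to the case $\kappa=1$ by passing to the variable $\mu:=\lambda_1+\cdots+\lambda_\kappa$ and collapsing the product of Riesz kernels $\|\lambda_i\|^{-(d-\tau_i)}$ that results. Two elementary properties of $K_\Delta$ are all that is needed about the set $\Delta$: first, $|K_\Delta(x)|\le|\Delta|$ for every $x\in\mathbb{R}^d$; second, since $\Delta$ is bounded the indicator $\chi_\Delta$ lies in $L_1(\mathbb{R}^d)\cap L_2(\mathbb{R}^d)$, so by Plancherel's theorem $K_\Delta\in L_2(\mathbb{R}^d)$ with $\int_{\mathbb{R}^d}|K_\Delta(x)|^2\,\mathrm{d}x=(2\pi)^d|\Delta|<\infty$. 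These two facts already dispose of the case $\kappa=1$: on $\{\|\lambda_1\|\le 1\}$ one bounds $|K_\Delta(\lambda_1)|^2$ by $|\Delta|^2$ and uses that $\|\lambda_1\|^{-(d-\tau_1)}$ is integrable near the origin because $\tau_1>0$; on $\{\|\lambda_1\|>1\}$ one uses that $\|\lambda_1\|^{-(d-\tau_1)}\le 1$ because $\tau_1<d$, and integrates $|K_\Delta|^2$.

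For $\kappa\ge 2$ I would substitute $\lambda_\kappa=\mu-\lambda_1-\cdots-\lambda_{\kappa-1}$ (the Jacobian is $1$) and apply Fubini's theorem to rewrite the integral as
\[
\int_{\mathbb{R}^d}|K_\Delta(\mu)|^2\,\Psi(\mu)\,\mathrm{d}\mu,\qquad
\Psi(\mu):=\int_{\mathbb{R}^{d(\kappa-1)}}\frac{\mathrm{d}\lambda_1\cdots\mathrm{d}\lambda_{\kappa-1}}{\|\lambda_1\|^{d-\tau_1}\cdots\|\lambda_{\kappa-1}\|^{d-\tau_{\kappa-1}}\,\|\mu-\lambda_1-\cdots-\lambda_{\kappa-1}\|^{d-\tau_\kappa}}.
\]
Then I would evaluate $\Psi(\mu)$ by iterating the Riesz composition formula $\int_{\mathbb{R}^d}\|y\|^{-a}\|x-y\|^{-b}\,\mathrm{d}y=C\,\|x\|^{d-a-b}$, valid for $0<a<d$, $0<b<d$, $a+b>d$. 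Integrating out $\lambda_1,\lambda_2,\dots,\lambda_{\kappa-1}$ one after another, at the step that removes $\lambda_j$ one combines the kernel $\|\lambda_j\|^{-(d-\tau_j)}$ with the current last kernel, which has the form $\|x_j-\lambda_j\|^{-(d-\sigma_{j-1})}$ for a suitable point $x_j$ depending on $\mu$ and the not-yet-integrated variables, where $\sigma_{j-1}:=\tau_\kappa+\tau_1+\cdots+\tau_{j-1}$ ($\sigma_0=\tau_\kappa$); by the composition formula this produces the kernel $\|x_j\|^{-(d-\sigma_j)}$ with $\sigma_j=\sigma_{j-1}+\tau_j$. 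After the last step one is left with $\Psi(\mu)=C\,\|\mu\|^{-(d-\tau_1-\cdots-\tau_\kappa)}$, and since $0<d-\sum_{i=1}^\kappa\tau_i<d$ the original integral equals $C\int_{\mathbb{R}^d}|K_\Delta(\mu)|^2\,\|\mu\|^{-(d-\sum_i\tau_i)}\,\mathrm{d}\mu$, which is finite by the case $\kappa=1$ applied with $\tau=\sum_i\tau_i\in(0,d)$.

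I do not expect a serious obstacle; the only point requiring care is checking that the hypotheses $0<a,b<d$ and $a+b>d$ of the composition formula are met at \emph{every} intermediate step, i.e.\ that each partial sum $\sigma_j$ of the $\tau_i$'s lies in $(0,d)$ so that the formula applies and no logarithmic term appears. This is immediate: each $\tau_i>0$ and $\sigma_j\le\sum_{i=1}^\kappa\tau_i<d$, while $a=d-\tau_j\in(0,d)$ and the condition $a+b=(d-\tau_j)+(d-\sigma_{j-1})>d$ is precisely $\sigma_j<d$. One could equally well argue by induction on $\kappa$, or invoke the multi-fold Riesz composition identity in a single step; all versions rest on the same partial-sum bounds. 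It is worth noting that convexity of $\Delta$ plays no role in this lemma — only $0<|\Delta|<\infty$ is used.
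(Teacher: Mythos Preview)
Your argument is correct. The paper itself does not prove this lemma at all; it is quoted from the external reference \cite{mink}, so there is no in-paper proof to compare against. Your route via the change of variables $\mu=\lambda_1+\cdots+\lambda_\kappa$ and iterated Riesz composition is the standard one and the verification of the exponent conditions $0<d-\tau_j<d$, $0<d-\sigma_{j-1}<d$, $\sigma_j<d$ at each step is exactly right. Two cosmetic remarks: since the integrand is nonnegative you are really invoking Tonelli rather than Fubini when you interchange the order of integration (this also justifies the iterated Riesz computation without knowing finiteness in advance), and your observation that only $0<|\Delta|<\infty$ is needed---not convexity---is accurate for this lemma.
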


\begin{theorem}{\rm\cite{mink}}\label{th2} Let $\eta(x),$ $x\in \mathbb{R}^d,$ be a homogeneous isotropic Gaussian random
field with $\mathbf{E}\eta(x)=0.$   If Assumptions~{\rm \ref{ass1}} and {\rm \ref{ass2}} hold, then for $r\to \infty$ the finite-dimensional distributions of
\[X_{r,\kappa}:=r^{(\kappa\alpha )/2-d}L^{-\kappa/2}(r)\int_{\Delta(r)}H_\kappa(\eta (x))\,\mathrm{d}x\]
converge weakly to the finite-dimensional distributions of
\[
X_\kappa(\Delta) :=c_2^{\kappa/2}(d,\alpha )  \int_{\mathbb{R}^{d\kappa}}^{{\prime }}K_{\Delta }(\lambda _1+\cdots
+\lambda _\kappa) \frac{W(\mathrm{d}\lambda _1)\ldots W(\mathrm{d}\lambda _\kappa)}{\left\| \lambda
_1\right\| ^{(d-\alpha )/2}\cdots \left\| \lambda _\kappa\right\| ^{(d-\alpha )/2}},\]
where $\int_{\mathbb{R}^{d\kappa}}^{{\prime}}$ denotes the multiple Wiener-It\^{o} integral.
\end{theorem}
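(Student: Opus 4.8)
The plan is to represent both sides as multiple Wiener--It\^{o} integrals of order $\kappa$ and reduce the convergence to an $L^2$-convergence of kernels, to which the It\^{o} isometry applies. First I would use the isonormal spectral representation of $\eta$ together with the diagram/It\^{o} formula expressing Hermite polynomials of a Gaussian spectral integral as multiple stochastic integrals (see \cite{dob,pec}):
\[
H_\kappa(\eta(x))=\int_{\mathbb{R}^{d\kappa}}^{\prime}e^{i(\lambda_1+\cdots+\lambda_\kappa,\,x)}\prod_{j=1}^{\kappa}\sqrt{f(\|\lambda_j\|)}\;W(\mathrm{d}\lambda_1)\cdots W(\mathrm{d}\lambda_\kappa).
\]
Integrating over $\Delta(r)$, applying a stochastic Fubini theorem, and using $K_{\Delta(r)}(y)=r^{d}K_{\Delta}(ry)$ (immediate from (\ref{K}) since $\Delta(r)=r\Delta$), the change of variables $\lambda_j=u_j/r$ combined with the self-similarity $\{W(\mathrm{d}(u/r))\}\stackrel{\mathrm{d}}{=}\{r^{-d/2}\widetilde W(\mathrm{d}u)\}$ of Gaussian white noise yields $X_{r,\kappa}\stackrel{\mathrm{d}}{=}I_\kappa(Q_r)$, where $I_\kappa(\cdot)$ is the $\kappa$-fold Wiener--It\^{o} integral with respect to a white noise $\widetilde W$ and
\[
Q_r(u_1,\ldots,u_\kappa):=K_{\Delta}(u_1+\cdots+u_\kappa)\prod_{j=1}^{\kappa}g_r(u_j),\qquad g_r(u):=r^{(\alpha-d)/2}L^{-1/2}(r)\sqrt{f(\|u\|/r)}.
\]
Both $Q_r$ and the kernel $Q(u_1,\ldots,u_\kappa):=c_2^{\kappa/2}(d,\alpha)\,K_{\Delta}(u_1+\cdots+u_\kappa)\prod_j\|u_j\|^{(\alpha-d)/2}$ of $X_\kappa(\Delta)$ are already symmetric, and $Q\in L^2(\mathbb{R}^{d\kappa})$ by Lemma~\ref{finint} with $\tau_j=\alpha$ (admissible since $\kappa\alpha<d$), so $X_\kappa(\Delta)$ is well defined and no symmetrization is needed.

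The second step is the pointwise limit of the kernels: for every fixed $u\neq 0$, using (\ref{f}),
\[
g_r(u)^2=r^{\alpha-d}L^{-1}(r)f(\|u\|/r)=c_2(d,\alpha)\|u\|^{\alpha-d}\,\frac{L(r/\|u\|)}{L(r)}+\|u\|^{\alpha-d}\,\frac{L(r/\|u\|)}{L(r)}\cdot{\cal O}\!\left((\|u\|/r)^{\upsilon}\right),
\]
and since $L$ is slowly varying the first ratio tends to $1$ while the $\varepsilon$-term vanishes because $\|u\|/r\to0$; hence $g_r(u)\to c_2^{1/2}(d,\alpha)\|u\|^{(\alpha-d)/2}$ and $Q_r\to Q$ a.e. on $\mathbb{R}^{d\kappa}$.

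The third step is to upgrade this to $\|Q_r-Q\|_{L^2(\mathbb{R}^{d\kappa})}\to0$. The It\^{o} isometry then gives $\mathbf{E}\,|I_\kappa(Q_r)-I_\kappa(Q)|^{2}=\kappa!\,\|Q_r-Q\|_{L^2}^{2}\to0$, so $X_{r,\kappa}\to X_\kappa(\Delta)$ in $L^2(\Omega)$, in particular weakly; for the finite-dimensional statement one reduces, via the Cram\'er--Wold device and the linearity of $K_{\Delta}$ in $\chi_{\Delta}$, to the same $L^2$-convergence applied to finite linear combinations. I expect the $L^2$-domination behind the dominated convergence theorem to be the main obstacle: one must produce a single $\Psi\in L^2(\mathbb{R}^{d\kappa})$ with $|Q_r|\le\Psi$ for all large $r$. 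When all $\|u_j\|\le r/x_0$ this uses the bound on $\varepsilon$ in Assumption~\ref{ass2} together with condition~(\ref{gr}) (equivalently (\ref{gr0}) by Lemma~\ref{lem0}) and Potter-type bounds for $L$, giving $g_r(u)^2\le C\|u\|^{\alpha-d}\max(\|u\|^{\delta},\|u\|^{-\delta})$ for arbitrarily small $\delta>0$; when some $\|u_j\|\ge x_0 r$ one uses instead the tail estimate $f(\|\lambda\|)={\cal O}(\|\lambda\|^{-d})$ from Remark~\ref{rem6}, which after taking the supremum over the admissible values of $r$ gives the same type of envelope; and in the intermediate range $\|u_j\|/r\asymp1$ one uses the local boundedness of $f$. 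Multiplying these bounds over $j$ and using $|K_{\Delta}(\,\cdot\,)|^{2}$, the resulting bound for $\Psi^2$ is a finite sum of terms $|K_{\Delta}(u_1+\cdots+u_\kappa)|^{2}\prod_j\|u_j\|^{\tau_j-d}$ with each $\tau_j\in\{\alpha-\delta,\alpha+\delta\}$, so $\sum_j\tau_j\le\kappa(\alpha+\delta)<d$ for $\delta$ small (here $\alpha<d/\kappa$ is used again), and each term is integrable by Lemma~\ref{finint}. Dominated convergence then yields $\|Q_r-Q\|_{L^2}\to0$, which completes the proof.
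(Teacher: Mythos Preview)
The paper does not prove Theorem~\ref{th2}: it is quoted verbatim from \cite{mink} as an auxiliary result, so there is no ``paper's own proof'' to compare against here. That said, your spectral--representation/kernel--convergence scheme is precisely the standard Dobrushin--Major argument used in \cite{mink}, and the representation you derive is exactly the one the present paper invokes (for $\kappa=2$) at the start of the proof of Theorem~\ref{rate}, where they write $X_{r,2}\stackrel{\mathcal D}{=}c_2(d,\alpha)\int'_{\mathbb{R}^{2d}}K_\Delta(\lambda_1+\lambda_2)Q_r(\lambda_1,\lambda_2)\prod_j\|\lambda_j\|^{-(d-\alpha)/2}W(\mathrm d\lambda_j)$.

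The one methodological difference worth noting is your third step. You close via dominated convergence for $\|Q_r-Q\|_{L^2}$; the present paper (in the proof of Theorem~\ref{rate}) instead estimates $\int|K_\Delta|^2(Q_r-1)^2\prod_j\|\lambda_j\|^{\alpha-d}\,\mathrm d\lambda$ directly, splitting $\mathbb{R}^{2d}$ into the growing box $A(r)=\{\max_j\|\lambda_j\|\le r^\gamma\}$ and its complement and bounding each piece with explicit powers of $r$. That direct estimation is what produces the \emph{rate} in Theorem~\ref{rate}; your dominated--convergence route is cleaner for the bare convergence statement of Theorem~\ref{th2} but, by design, loses the quantitative information.

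Your majorant construction is essentially right but phrased a bit loosely: the three regimes you list are cut according to $\|u_j\|/r$, which depends on $r$, whereas the envelope $\Psi$ must not. The clean way to say it is to bound $g_r(u)^2$ uniformly in $r\ge r_0$ by $C\|u\|^{\alpha-d}\max(\|u\|^{\delta},\|u\|^{-\delta})$: for $\|u\|\le1$ use Lemma~\ref{lem0} (giving $L(r/\|u\|)/L(r)$ bounded) together with the $\varepsilon$--bound in Assumption~\ref{ass2}; for $\|u\|>1$ use Potter's bounds on $L$ (Theorem~1.5.3 in \cite{bin}, as the paper does repeatedly) plus the $f(\|\lambda\|)=\mathcal O(\|\lambda\|^{-d})$ tail from Remark~\ref{rem6}, taking the supremum over $r\ge r_0$. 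With that single envelope, Lemma~\ref{finint} with $\tau_j\in\{\alpha\pm\delta\}$ and $\kappa(\alpha+\delta)<d$ gives integrability, and the rest of your argument goes through.
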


\begin{definition}
The probability distribution of $X_2(\Delta)$ will be called the Rosenblatt-type distribution. \end{definition}
 It is a generalization of the Rosenblatt distribution to arbitrary set~$\Delta.$ Consult \cite{gar,leotau,ta0,ta1,ta3,tud} on various properties and applications of the Rosenblatt distribution.

\begin{lemma}\label{lem3}
The Rosenblatt-type distribution has a bounded probability density function:
\[
\sup_{z\in\mathbb R}\,p_{X_2(\Delta)}\left( z\right) =\sup_{z\in\mathbb R}\,\frac{\mathrm{d}}{\mathrm{d}z}P\left( X_2(\Delta) \leq z\right)<+\infty.
\label{22}
\]
\end{lemma}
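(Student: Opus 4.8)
The plan is to exploit the Wiener–Itô representation $X_2(\Delta) = c_2(d,\alpha)\int_{\mathbb{R}^{2d}}' K_\Delta(\lambda_1+\lambda_2)\,\|\lambda_1\|^{-(d-\alpha)/2}\|\lambda_2\|^{-(d-\alpha)/2}\,W(\mathrm{d}\lambda_1)W(\mathrm{d}\lambda_2)$ from Theorem~\ref{th2} and to diagonalize the associated Hilbert–Schmidt kernel. A random variable living in the second Wiener chaos can be written as $\sum_{j\ge 1}\mu_j(\xi_j^2-1)$, where $\xi_j$ are i.i.d.\ standard normal and $\mu_j$ are the eigenvalues of the symmetric kernel $K(\lambda_1,\lambda_2) := c_2(d,\alpha)K_\Delta(\lambda_1+\lambda_2)\|\lambda_1\|^{-(d-\alpha)/2}\|\lambda_2\|^{-(d-\alpha)/2}$ acting on $L_2(\mathbb{R}^d)$ (after the usual symmetrization and passage to the real Hermite form). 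The key point I would establish first is that this kernel is genuinely Hilbert–Schmidt, i.e.\ $\sum_j \mu_j^2 = \|K\|_{L_2(\mathbb{R}^{2d})}^2 < \infty$; but this is exactly Lemma~\ref{finint} with $\kappa = 2$ and $\tau_1 = \tau_2 = \alpha$, using that $2\alpha < d$ since $\alpha \in (0, d/\kappa) = (0,d/2)$. So the kernel is Hilbert–Schmidt, hence compact, the eigenvalue sequence $\{\mu_j\}$ is square-summable, and in particular at least one $\mu_j \ne 0$ (the kernel is not a.e.\ zero).

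With the representation $X_2(\Delta) \stackrel{d}{=} \sum_{j\ge 1}\mu_j(\xi_j^2-1)$ in hand, I would prove boundedness of the density via the characteristic function. The characteristic function of $\mu_j(\xi_j^2-1)$ is $e^{-i\mu_j t}(1-2i\mu_j t)^{-1/2}$, so
\[
\bigl|\varphi_{X_2(\Delta)}(t)\bigr| = \prod_{j\ge 1}\bigl|1-2i\mu_j t\bigr|^{-1/2} = \prod_{j\ge 1}\bigl(1+4\mu_j^2 t^2\bigr)^{-1/4}.
\]
Pick any two indices $j_1 \ne j_2$ with $\mu_{j_1},\mu_{j_2}\ne 0$ (possible because there are infinitely many nonzero eigenvalues — the kernel $K_\Delta(\lambda_1+\lambda_2)$ has infinite-dimensional range, a point I would verify, e.g.\ by noting $K_\Delta$ is not supported on a finite-dimensional subspace); then $|\varphi_{X_2(\Delta)}(t)| \le (1+4\mu_{j_1}^2 t^2)^{-1/4}(1+4\mu_{j_2}^2 t^2)^{-1/4} \le C(1+t^2)^{-1/2}$, which is \emph{not} integrable. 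To get integrability I instead retain three or more nonzero eigenvalues (or simply bound $|\varphi_{X_2(\Delta)}(t)| \le \prod_{j=1}^{N}(1+4\mu_j^2t^2)^{-1/4} \le C_N (1+t^2)^{-N/4}$ for $N$ as large as I like), so that for $N \ge 5$ the bound $C(1+t^2)^{-N/4}$ is integrable on $\mathbb{R}$. Hence $\varphi_{X_2(\Delta)} \in L_1(\mathbb{R})$, and by the Fourier inversion formula the density exists and satisfies $\sup_z p_{X_2(\Delta)}(z) \le \frac{1}{2\pi}\int_{\mathbb{R}}|\varphi_{X_2(\Delta)}(t)|\,\mathrm{d}t < +\infty$.

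The main obstacle is the second-chaos diagonalization step: one must be careful that the double Wiener–Itô integral with the complex white noise $W$ really does reduce, after accounting for the Hermitian symmetry $W(-\mathrm{d}\lambda) = \overline{W(\mathrm{d}\lambda)}$ and the symmetry $K_\Delta(-x) = \overline{K_\Delta(x)}$, to a real diagonal series $\sum_j \mu_j(\xi_j^2-1)$ with real, square-summable $\mu_j$ — this is standard (it is the spectral theorem applied to the self-adjoint Hilbert–Schmidt operator with kernel $K$), but it needs the Hilbert–Schmidt bound from Lemma~\ref{finint} to even make sense of the operator, and one should also check the kernel has infinitely many nonzero eigenvalues so that enough decay is available. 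An alternative, slightly cleaner route that avoids explicit diagonalization: bound $|\varphi_{X_2(\Delta)}(t)|$ directly using the general fact that for a second-chaos variable $F = I_2(g)$ one has $\varphi_F(t) = \exp\bigl(-\tfrac12\sum_j\log(1-2i\mu_j t) - i\mu_j t\bigr)$ with $\sum_j\mu_j^2 = \tfrac12\|g\|^2$; then one only needs a lower bound on how many $\mu_j^2$ exceed a fixed threshold, which again follows since $g \ne 0$ and, being the kernel of a non-finite-rank operator, has infinitely many nonzero eigenvalues.
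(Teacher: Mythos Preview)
Your approach is sound and genuinely different from the paper's. The paper does not diagonalize the kernel at all; instead it invokes a criterion of Davydov \cite{dav} for boundedness of densities of multiple Wiener--It\^o integrals. Davydov's Theorem~2 requires exhibiting two linearly independent test functions $h_1,h_2\in L_2(\mathbb{R}^d)$ satisfying strict-positivity conditions (the paper's (\ref{g01}) and (\ref{g02})), and the bulk of the paper's argument is an explicit construction of such $h_j$ as truncated radial Bessel-type functions whose Fourier transforms approximate indicators of two balls of different radii inside $\Delta$; the conditions are then verified by a limiting argument. Your route---diagonalize the Hilbert--Schmidt kernel (whose $L_2$-norm is finite by Lemma~\ref{finint}), compute $|\varphi_{X_2(\Delta)}(t)|=\prod_j(1+4\mu_j^2 t^2)^{-1/4}$, and apply Fourier inversion---is more self-contained and in fact gives the stronger conclusion $\varphi_{X_2(\Delta)}\in L_1(\mathbb{R})$. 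The paper's approach, by contrast, treats the density question as a black box and would extend to higher chaoses without an eigenvalue-multiplicity argument. Two small remarks on your write-up: three nonzero eigenvalues already suffice, since $\prod_{j=1}^N(1+4\mu_j^2 t^2)^{-1/4}\sim C|t|^{-N/2}$ is integrable once $N\ge 3$; and the infinite-rank step you flag can be settled cleanly by writing $K_\Delta(\lambda_1+\lambda_2)=\int_\Delta e^{i(u,\lambda_1)}e^{i(u,\lambda_2)}\,\mathrm{d}u$, which shows that the associated (positive semidefinite) operator factors as $A^*A$ with $A:L_2(\mathbb{R}^d)\to L_2(\Delta)$, $(Af)(u)=\int e^{i(u,\lambda)}\|\lambda\|^{-(d-\alpha)/2}f(\lambda)\,\mathrm{d}\lambda$, and has kernel equal to those $f$ whose weighted Fourier transform vanishes on the open set $\Delta$---a subspace of infinite codimension.
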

\begin{proof}
By Theorem 1  in \cite{dav} (also consult \cite{nor2} for some recent results)  it follows that the probability distribution of $X_{2}(\Delta)$ is absolutely continuous. 
To show that $X_{2}(\Delta)$ has a bounded density we will use Theorem 2  of \cite{dav} which is valid for general measurable spaces. In our case it requires the existence of linearly independent functions $h_1(\cdot),h_2(\cdot)\in L_2\left(\mathbb{R}^d\right)$ such that
\begin{equation}\label{g01}
\int_{\mathbb{R}^{2d}}\frac{K_{\Delta }(\lambda _1+\lambda _2)\,h_1(\lambda_1)h_1(\lambda_2)}{\left\| \lambda
_1\right\| ^{(d-\alpha )/2} \left\| \lambda _2\right\| ^{(d-\alpha )/2}}\,\mathrm{d}\lambda _1\mathrm{d}\lambda _2>0,
\end{equation}
and
\begin{equation}\label{g02}
\int_{\mathbb{R}^{4d}}\triangledown_K(\lambda _1,\lambda _2,\lambda _3,\lambda _4)\,h_1(\lambda_1)h_1(\lambda_2)h_2(\lambda_3)h_2(\lambda_4)\,\prod_{j=1}^4\frac{\mathrm{d}\lambda _j}{\left\| \lambda_j\right\| ^{(d-\alpha )/2}}>0,
\end{equation}
where $\triangledown_K(\lambda _1,\lambda _2,\lambda _3,\lambda _4):=K_{\Delta }(\lambda _1+\lambda _2){K_{\Delta }(\lambda _3+\lambda _4)}-K_{\Delta }(\lambda _1+\lambda _3)K_{\Delta }(\lambda _2+\lambda _4).$

Let us choose 
\begin{equation}\label{h12} h_j(\lambda,A_0):= \chi_{\raisebox{-3pt}{\scriptsize $v(A_0)$}}(\lambda) \left(\frac{\ell_j\{\Delta\}}{2\pi}\right)^{d/2} \frac{J_{{d}/{2}} (\left\| \lambda\right\|\cdot \ell_j\{\Delta\})}{
\left\| \lambda\right\|^{\alpha/2}}, 
\end{equation}
where $A_0$ is a positive number, $v(A_0)$ is the Euclidean ball of
radius $A_0,$ and
 $\ell_j\{\Delta\}>0,$ $j=1,2.$   For convenience, we will also use the definition (\ref{h12}) in the case  $A_0=+\infty$ assuming that $\chi_{\raisebox{-3pt}{\scriptsize $v(+\infty)$}}(\lambda)\equiv 1.$ 

We will need the following asymptotic properties of  
the Bessel function of the first kind, see (8.402) and (8.451) \cite{gra},
\[J_{d/2}(z)\sim \sqrt{\frac{2}{\pi
z}}\cos\left(z-\pi(d+1)/4\right),\
z\to\infty,\quad
J_{d/2}(z)\sim \frac{z^{d/2}}{2^{d/2}\Gamma\left(d/2+1\right)},\
z\to 0. \]

By the definition (\ref{h12}),
\begin{itemize}
\item $h_j(\cdot,A_0)\in L_2\left(\mathbb{R}^d\right),$ $j=1,2,$ are radial functions with compact supports for which 
$\tilde{h}_j(\lambda,A_0):=\left\| \lambda
\right\| ^{(\alpha-d)/2}h_j(\lambda,A_0) \in L_1\left(\mathbb{R}^d\right)\cap L_2\left(\mathbb{R}^d\right),$ when
$A_0<+\infty\,;$
\item $\tilde{h}_j(\lambda,A_0)\in L_2\left(\mathbb{R}^d\right),$ when $A_0=+\infty.$
\end{itemize}

Therefore,  substituting (\ref{K}) and  (\ref{h12}) into (\ref{g01}) and (\ref{g02}) and legitimately changing the order of integration, for $A_0<+\infty$  we  get 
\[I(A_0):=\int_{\mathbb{R}^{2d}}\frac{K_{\Delta }(\lambda _1+\lambda _2)\,h_1(\lambda_1,A_0)h_1(\lambda_2,A_0)}{\left\| \lambda
_1\right\| ^{(d-\alpha )/2} \left\| \lambda _2\right\| ^{(d-\alpha )/2}}\,\mathrm{d}\lambda _1\mathrm{d}\lambda _2\]
\begin{equation}\label{I}
=\int_{\Delta } \int_{\mathbb{R}^{2d}} \frac{e^{i(\lambda _1+\lambda _2,u)}h_1(\lambda_1,A_0)h_1(\lambda_2,A_0)}{\left\| \lambda
_1\right\| ^{(d-\alpha )/2} \left\| \lambda _2\right\| ^{(d-\alpha )/2}}\,\mathrm{d}\lambda _1\mathrm{d}\lambda _2\,\mathrm{d}u=\int_{\Delta}  \left(\hat{\tilde{h}}_1(u,A_0)\right)^2\,\mathrm{d}u,
\end{equation}
and similarly
\[
I_0(A_0):=\int_{\mathbb{R}^{4d}}\triangledown_K(\lambda _1,\lambda _2,\lambda _3,\lambda _4)\,h_1(\lambda_1,A_0)h_1(\lambda_2,A_0)h_2(\lambda_3,A_0)h_2(\lambda_4,A_0)\,\prod_{j=1}^4\frac{\mathrm{d}\lambda _j}{\left\| \lambda_j\right\| ^{(d-\alpha )/2}}
\]
\begin{equation}\label{I0}
=\int_{\Delta}  \left(\hat{\tilde{h}}_1(u,A_0)\right)^2\,\mathrm{d}u\int_{\Delta}  \left(\hat{\tilde{h}}_2(u,A_0)\right)^2\,\mathrm{d}u -\left(\int_{\Delta}  \hat{\tilde{h}}_1(u,A_0)\hat{\tilde{h}}_2(u,A_0)\,\mathrm{d}u\right)^2\,,
\end{equation}
where $\hat{\tilde{h}}_j(\cdot,A_0)$ are the Fourier transforms of $\tilde{h}_j(\cdot,A_0),$ $j=1,2.$

Notice, that for $A_0=+\infty$ we get, see \cite{ole12},
\begin{equation}\label{chi}\chi_{\raisebox{-3pt}{\scriptsize $v\left(\ell_j\{\Delta\}\right)$}}(u)=\left(\frac{\ell_j\{\Delta\}}{\sqrt{2\pi}}\right)^d\int_{\mathbb
R^d}e^{i(u,\lambda)}\frac{J_{{d}/{2}} (\left\| \lambda\right\|\ell_j\{\Delta\})}{
(\left\| \lambda\right\|\ell_j\{\Delta\})^{d/2}}\,\mathrm{d}\lambda=\hat{\tilde{h}}_j(u,+\infty),\quad j=1,2.\end{equation} 

By the definition (\ref{h12}), 
\[\tilde{h}_j(\lambda,A_0)\to \tilde{h}_j(\lambda,+\infty)\ \text{in} \ L_2(\mathbb R^d),\quad \text{when} \ A_0\to +\infty.\]
Hence, (\ref{I}), (\ref{I0}), and (\ref{chi})  yield
\[
I(A_0)\to \int_{\Delta}  \left(\hat{\tilde{h}}_1(u,+\infty)\right)^2\,\mathrm{d}u=\int_{\Delta}  \left(\chi_{\raisebox{-3pt}{\scriptsize $v\left(\ell_1\{\Delta\}\right)$}}(u)\right)^2\,\mathrm{d}u=|v(\ell_1\{\Delta\})\cap\Delta|\]
and 
\[
I_0(A_0)\to |v(\ell_1\{\Delta\})\cap\Delta|\cdot |v\left(\ell_2\{\Delta\}\right)\cap\Delta| -|v\left(\min\left(\ell_1\{\Delta\},\ell_2\{\Delta\}\right)\right)\cap\Delta|^2\,\] 
when $A_0\to +\infty.$

If $\ell_1\{\Delta\}:=diam\{\Delta\}$ and $\ell_2\{\Delta\}$ is a such radius that $|v(\ell_2\{\Delta\})\cap\Delta|=|\Delta|/2,$ then 
\[I(A_0)\to |\Delta|>0 \quad \text{and}\quad
I_0(A_0)\to |\Delta|^2/4>0.\] 
Hence, there exists  $A_0<+\infty$  such that the  conditions (\ref{g01}) and (\ref{g02}) are satisfied for $h_j(\lambda)=h_j(\lambda,A_0),$ $j=1,2.$ Finally, we complete the proof noting that $h_j(\lambda,A_0),$ $j=1,2,$ are linearly independent functions.
\end{proof}
\begin{definition} Let $Y_1$ and $Y_2$ be arbitrary random variables. The uniform (Kolmogorov) metric  for the distributions of $Y_1$ and $Y_2$ is defined by the formula
\begin{equation*}
{\rho}\left( Y_1,Y_2\right) =\underset{z\in \mathbb R}{\sup }\left| P\left( Y_1\leq
z\right) -P\left( Y_2\leq z\right) \right| .
\end{equation*}
\end{definition}
The following result follows from Lemma~1.8~\cite{pet}.

\begin{lemma}\label{lem1}
If $X,Y,$  and $Z$ are arbitrary random variables, then for any $\varepsilon >0:$
\begin{equation*}
\rho\left( X+Y,Z\right) \leq {\rho}( X,Z)+\rho\left( Z+\varepsilon,Z\right)+P\left( \left|
Y\right| \ge \varepsilon \right).
\end{equation*}
\end{lemma}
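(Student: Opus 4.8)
\emph{Proof sketch.} The plan is to reduce the statement to a pointwise bound on the distribution functions, uniformly in $z\in\mathbb R$, and then take the supremum. Fix $\varepsilon>0$ and $z\in\mathbb R$. On the event $\{|Y|<\varepsilon\}$ one has $X-\varepsilon<X+Y<X+\varepsilon$; splitting $\{X+Y\le z\}$ according to whether $|Y|<\varepsilon$ or $|Y|\ge\varepsilon$, and using the inclusions $\{X\le z-\varepsilon,\,|Y|<\varepsilon\}\subseteq\{X+Y\le z\}$ and $\{X+Y\le z,\,|Y|<\varepsilon\}\subseteq\{X\le z+\varepsilon\}$, gives the two-sided estimate
\[
P(X\le z-\varepsilon)-P(|Y|\ge\varepsilon)\le P(X+Y\le z)\le P(X\le z+\varepsilon)+P(|Y|\ge\varepsilon).
\]

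Next I would pass from $X$ to $Z$ at the cost of $\rho(X,Z)$: by the definition of the uniform metric, $P(X\le z\pm\varepsilon)\le P(Z\le z\pm\varepsilon)+\rho(X,Z)$ and $P(X\le z\pm\varepsilon)\ge P(Z\le z\pm\varepsilon)-\rho(X,Z)$. Finally the $\pm\varepsilon$ shift in the argument of the distribution function of $Z$ is absorbed by observing that $P(Z+\varepsilon\le u)=P(Z\le u-\varepsilon)$, so that, since $F_Z(u):=P(Z\le u)$ is nondecreasing, both $F_Z(z+\varepsilon)-F_Z(z)$ and $F_Z(z)-F_Z(z-\varepsilon)$ are bounded by $\sup_{u\in\mathbb R}\big|P(Z+\varepsilon\le u)-P(Z\le u)\big|=\rho(Z+\varepsilon,Z)$. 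Combining these three estimates yields $\big|P(X+Y\le z)-P(Z\le z)\big|\le\rho(X,Z)+\rho(Z+\varepsilon,Z)+P(|Y|\ge\varepsilon)$ for every $z\in\mathbb R$, and taking $\sup_{z\in\mathbb R}$ proves the lemma. Equivalently, one can derive the bound for $\rho(X+Y,X)$ from Lemma~1.8 of~\cite{pet} and combine it with the triangle inequality $\rho(X+Y,Z)\le\rho(X+Y,X)+\rho(X,Z)$, which is the route indicated in the text.

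There is essentially no real obstacle: the argument is elementary. The only points requiring a little care are keeping the directions of the inequalities consistent in the sandwich bound for $P(X+Y\le z)$, and noticing that the one-sided oscillation $F_Z(z+\varepsilon)-F_Z(z)$ of the distribution function of $Z$ is controlled \emph{exactly} by $\rho(Z+\varepsilon,Z)$ rather than by some larger quantity.
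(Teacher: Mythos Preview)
Your direct argument is correct: the sandwich bound for $P(X+Y\le z)$, the passage from $X$ to $Z$ at cost $\rho(X,Z)$, and the control of the $\pm\varepsilon$ shift by $\rho(Z+\varepsilon,Z)$ are all valid, and combining them gives exactly the stated inequality. The paper itself does not give a proof but simply refers to Lemma~1.8 of \cite{pet}; your write-up essentially supplies the elementary details behind that citation.

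One small remark on your closing sentence: the ``alternative route'' via the triangle inequality $\rho(X+Y,Z)\le\rho(X+Y,X)+\rho(X,Z)$ together with a Petrov-type bound on $\rho(X+Y,X)$ would naturally produce $\rho(X+\varepsilon,X)$ rather than $\rho(Z+\varepsilon,Z)$ in the final estimate, so it does not literally recover the inequality as stated. Your main argument, which passes to $Z$ \emph{before} absorbing the shift, is what gives the form with $\rho(Z+\varepsilon,Z)$; this is a cosmetic point, but worth keeping straight.
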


\section{Main result}\label{sec3}

In this paper we consider the case of Rosenblatt-type limit distributions, i.e. $\kappa=2$ and $\alpha \in (0,d/2)$  in Theorem~\ref{th2}. The main result describes the rate of convergence of $K_r$ to $X_2(\Delta),$ when $r\rightarrow \infty.$ To prove it we use some techniques and facts from \cite{bra,anh,mink}.

\begin{theorem}\label{rate}
If Assumptions~{\rm \ref{ass1}} and {\rm \ref{ass2}} hold,   $q< {d}/{2}-\alpha,$ and $H {\rm rank}\,G=2,$ then for any $\varkappa<\frac{1}{3}\min\left(\frac{\alpha(d-2\alpha)}{d-\alpha},\varkappa_1\right),$
\begin{equation*}
\rho\left(\frac{2\,K_{r}}{C_2\,r^{d-\alpha}L(r)},X_{2}(\Delta) \right)=o (r^{-\varkappa}),\quad r\rightarrow \infty ,
\end{equation*}
where  $C_2$ is defined by {\rm(\ref{herm})}  and
\[\varkappa_1:=2\min\left(q,\left(\frac{2}{d-2\alpha}+\frac{2}{d+1-2\alpha}+\frac{1}{\upsilon}\right)^{-1}\right).\]
\end{theorem}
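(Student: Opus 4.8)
The plan is to decompose the normalized functional into the "leading Hermite term" plus a remainder, estimate each piece separately, and then glue them together using Lemma~\ref{lem1}. Write $\widetilde K_r := \frac{2K_r}{C_2 r^{d-\alpha}L(r)}$ and $\widetilde K_{r,2} := \frac{2 K_{r,2}}{C_2 r^{d-\alpha}L(r)} = r^{\alpha-d}L^{-1}(r)\int_{\Delta(r)}H_2(\eta(x))\,\mathrm{d}x = X_{r,2}$ in the notation of Theorem~\ref{th2}. By Lemma~\ref{lem1} applied with $X = \widetilde K_{r,2}$, $Y = \widetilde K_r - \widetilde K_{r,2}$, $Z = X_2(\Delta)$, for every $\varepsilon>0$
\[
\rho\left(\widetilde K_r, X_2(\Delta)\right) \le \rho\left(\widetilde K_{r,2}, X_2(\Delta)\right) + \rho\left(X_2(\Delta)+\varepsilon, X_2(\Delta)\right) + P\left(\left|\widetilde K_r - \widetilde K_{r,2}\right|\ge \varepsilon\right).
\]
The middle term is at most $\varepsilon \cdot \sup_z p_{X_2(\Delta)}(z)$, which is finite by Lemma~\ref{lem3}; this is exactly why the boundedness of the Rosenblatt-type density was needed. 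So the task reduces to (i) bounding $\rho(\widetilde K_{r,2}, X_2(\Delta))$, (ii) bounding the tail probability $P(|\widetilde K_r - \widetilde K_{r,2}|\ge\varepsilon)$ via Chebyshev and a variance/$L_2$ estimate on the non-leading Hermite components, and then (iii) optimizing $\varepsilon = \varepsilon(r)$ to balance the three contributions, yielding the stated exponent $\varkappa$.

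For step (i), the idea is to compare $\widetilde K_{r,2}$ with the integral $X_2(\Delta)$ by passing through the spectral (Wiener--It\^o) representation. One writes $\widetilde K_{r,2}$ as a double Wiener--It\^o integral with kernel built from the scaled indicator $K_{\Delta(r)}$ and the true spectral density $f$, and $X_2(\Delta)$ as the double integral with kernel built from $K_\Delta$ and the pure power $\|\lambda\|^{\alpha-d}$. After the change of variables $\lambda_i \mapsto \lambda_i/r$ the difference of the two kernels splits into (a) a term coming from the discrepancy between $f(\|\lambda\|/r)$ and its leading power-law part — controlled by the $\varepsilon(\cdot)$ term in \eqref{f} and by condition \eqref{gr} (equivalently \eqref{gr0} via Lemma~\ref{lem0}), which contributes the $q$ and $\upsilon$ dependence — and (b) there is no set-geometry discrepancy since $K_{\Delta(r)}(\lambda) = r^d K_\Delta(r\lambda)$ exactly. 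To convert an $L_2$-bound on the kernel difference into a bound on the Kolmogorov distance between the laws, I would invoke the absolute continuity and bounded density of $X_2(\Delta)$ (Lemma~\ref{lem3}) once more: $\rho$ is dominated by a constant times a power of the $L_2$-distance between the second-chaos random variables, by the standard "smoothing inequality" argument (a variable with bounded density absorbs a small $L_2$ perturbation at the cost of $\rho \lesssim \|\text{perturbation}\|_2$). The finiteness of the relevant integrals throughout is guaranteed by Lemma~\ref{finint} with $\tau_1=\tau_2=\alpha$ (note $2\alpha<d$), and by the hypothesis $q<d/2-\alpha$ which keeps the perturbed exponents in the admissible range.

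For step (ii), one expands $G(\eta(x)) - \frac{C_2}{2}H_2(\eta(x)) = \sum_{j\ge 3}\frac{C_j}{j!}H_j(\eta(x))$ (using $H\mathrm{rank}\,G = 2$, so $C_0=C_1=0$), and computes
\[
\mathbf{Var}\!\left(K_r - K_{r,2}\right) = \sum_{j\ge 3}\frac{C_j^2}{j!}\int_{\Delta(r)}\int_{\Delta(r)} B^j(\|x-y\|)\,\mathrm{d}x\,\mathrm{d}y
\]
using Remark~3 (diagram formula) to kill cross terms. Each integral is handled via the representation \eqref{dint} together with the known asymptotics of $\psi_{\Delta(r)}$ and of $B$, exactly as in the derivation that $\mathbf{Var}\,K_{r,2}\sim c\, r^{2d-2\alpha}L^2(r)$; because $j\ge 3$ and $j\alpha$ may exceed $d$, these terms are of strictly smaller order, giving $\mathbf{Var}(K_r-K_{r,2}) = o(r^{2d-2\alpha}L^2(r))$ with an explicit polynomial gain controlled by $\frac{\alpha(d-2\alpha)}{d-\alpha}$ — this is the origin of that term in the min defining $\varkappa$. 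Then $P(|\widetilde K_r - \widetilde K_{r,2}|\ge\varepsilon) \le \varepsilon^{-2}\,\mathbf{Var}(K_r - K_{r,2})/(C_2^2 r^{2d-2\alpha}L^2(r)/4)$ by Chebyshev (the mean of $K_r-K_{r,2}$ is zero since all Hermite ranks involved are $\ge 1$).

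The main obstacle I anticipate is step (i): carefully splitting the kernel difference and bounding its $L_2$-norm by an honest power of $r^{-1}$ uniformly over the relevant region, while tracking the slowly varying factor $L$ and the remainder $\varepsilon(\cdot)$. In particular one must cut the $\lambda$-integration into a bounded region and a tail, use \eqref{gr0} on the bounded part and Remark~\ref{rem6} (i.e. $f(\|\lambda\|) = \mathcal O(\|\lambda\|^{-d})$) on the tail, and check that the exponents produced at each stage — the $\frac{2}{d-2\alpha}$, $\frac{2}{d+1-2\alpha}$, and $\frac{1}{\upsilon}$ appearing in $\varkappa_1$ — combine correctly when one finally optimizes $\varepsilon(r)$. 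Everything else is a bookkeeping exercise in the asymptotics of incomplete beta functions (for $\psi_{\Delta(r)}$), properties of slowly varying functions from \cite{bin}, and the elementary inequality of Lemma~\ref{lem1}; the conceptual content is entirely in the spectral comparison and in the use of Lemma~\ref{lem3} to make the Kolmogorov metric behave like an $L_2$ distance.
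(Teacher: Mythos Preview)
Your proposal is essentially correct and follows the paper's own route: decompose $K_r$ into the rank-2 term $K_{r,2}$ plus the tail $S_r=\sum_{j\ge 3}\frac{C_j}{j!}\int_{\Delta(r)}H_j(\eta)$, apply Lemma~\ref{lem1} twice (once to peel off $S_r$, once to compare $X_{r,2}$ with $X_2(\Delta)$), bound the tail by a variance estimate on $\int\!\!\int B^3(\|x-y\|)\,\mathrm{d}x\,\mathrm{d}y$ via \eqref{dint}, and bound $\mathbf{Var}(X_{r,2}-X_2(\Delta))$ through the spectral representation split into a core region $\{\max\|\lambda_i\|\le r^\gamma\}$ and its complement.

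Two points to tighten. First, your parenthetical ``$\rho\lesssim\|\text{perturbation}\|_2$'' is too optimistic: the combination of Lemma~\ref{lem1}, the bounded density of $X_2(\Delta)$, and Chebyshev yields only $\rho\le C\varepsilon+\varepsilon^{-2}\|\cdot\|_2^2$, hence $\rho\lesssim\|\cdot\|_2^{2/3}$ after optimizing $\varepsilon$. This $2/3$ power, applied to \emph{both} the $S_r$ piece and the $X_{r,2}-X_2(\Delta)$ piece, is precisely the origin of the global factor $\tfrac13$ in the exponent $\varkappa$; if you used the linear bound you would overshoot the theorem. Second, for the tail region in the spectral integral the paper relies on the sharp spherical $L_2$-average decay $\int_{S^{d-1}}|K_\Delta(z\omega)|^2\,\mathrm{d}\omega=\mathcal O(z^{-d-1})$ for convex $\Delta$ (Brandolini--Hofmann--Iosevich \cite{bra}); this is where the term $\frac{2}{d+1-2\alpha}$ in $\varkappa_1$ comes from, and it is not recoverable from Lemma~\ref{finint} or Remark~\ref{rem6} alone.
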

\begin{rem} The order of convergence $\varkappa$ depends on the three parameters $\alpha,$ $\upsilon,$ and $q.$ Recall the meaning of these parameters: $\alpha$ is a long-range dependence parameter,  $q$ gives the order for the upper bound of the slowly varying (with remainder) function $L(\cdot),$ and $\upsilon$ describes the  magnitude of deviations of the spectral density from $c_2(d,\alpha )\left\| \lambda \right\|^{\alpha
-d}L\left( 1/\left\| \lambda \right\|\right)$ at the origin. 
\end{rem}
\begin{proof} Since $H {\rm rank}\,G=2,$ it follows that $K_r$ can be represented in the space of squared-integrable random variables $L_2(\Omega)$ as
\[K_r = K_{r,2}+S_r :=\frac{C_2 }{2}
\int_{\Delta(r)}H_2 (\eta (x))\,\mathrm{d}x + \sum_{j\geq 3}\frac{C_j}{j!}
\int_{\Delta(r)}H_j (\eta (x))\,\mathrm{d}x,
\]
where $C_j $ are coefficients of the Hermite series (\ref{herm}) of the function $G(\cdot).$

Notice that $\mathbf{E}K_{r,2}=\mathbf{E}S_r=\mathbf{E} X_{2}(\Delta)=0,$ and
\[X_{r,2}=\frac{2\,K_{r,2}}{C_2\,r^{d-\alpha}L(r)}.\]

It follows from Assumption~\ref{ass1} that $|L(u)/u^\alpha|=|B(u)|\le B(0)=1.$ Thus, by the proof of Theorem~4~\cite{mink}, 
\begin{eqnarray}
 \mathbf{Var}\, S_{r} &\leq& |\Delta|^2r^{2d-3\alpha}\sum_{j\geq 3}\frac{C_j ^2}{j !}
   \int_0^{diam\left\{ \Delta
   \right\}} z^{-3\alpha} L^{3}\left(rz\right)\psi _{\Delta}(z)dz\nonumber\\
 &=&  |\Delta|^2r^{2(d-\alpha)}L^{2}(r)\sum_{j\geq 3}\frac{C_j ^2}{j !}
    \int_0^{diam\left\{ \Delta
    \right\}} z^{-2\alpha} \frac{L^{2}\left(rz\right)}{L^{2}(r)}  \frac{L\left(rz\right)}{(rz)^{\alpha}}\psi _{\Delta}(z)\,\mathrm{d}z.\label{varq}
\end{eqnarray}

We represent the integral in (\ref{varq}) as the sum of two integrals $I_1$ and $I_2$ with the ranges of integration $[0,r^{-\beta_1}]$ and $(r^{-\beta_1},diam\left\{ \Delta
    \right\}]$ respectively, where $\beta_1\in(0,1).$

It follows from Assumption~\ref{ass1} that  $|L(u)/u^\alpha|=|B(u)|\le B(0)=1$ and we can estimate the first integral as 
\[I_1\le\int_0^{r^{-\beta_1}} z^{-2\alpha} \frac{L^{2}\left(rz\right)}{L^{2}(r)}  \psi _{\Delta}(z)\,\mathrm{d}z
\le\left(\frac{\sup_{0\le s\le r}s^{\delta/2}L\left(s\right)}{r^{\delta/2}L(r)}\right)^{2} \int_0^{r^{-\beta_1}} z^{-\delta}z^{-2\alpha}  \psi _{\Delta}(z)\,\mathrm{d}z,
\]
where $\delta$ is an arbitrary number in $(0,\min(\alpha,d-2\alpha)).$

By Assumption~\ref{ass1} the function $L\left(\cdot\right)$ is locally bounded. By Theorem~1.5.3 \cite{bin},  there exist $r_0>0$ and $C>0$ such that for all $r\ge r_0$
\[\frac{\sup_{0\le s\le r}s^{\delta/2}L\left(s\right)}{r^{\delta/2}L(r)}\le C.\]

Using (\ref{dint}) we obtain
\[\int_0^{r^{-\beta_1}} z^{-\delta}z^{-2\alpha}  \psi _{\Delta}(z)\,\mathrm{d}z\le\frac{C}{\left| \Delta \right| }\int_{0}^{r^{-\beta_1}}\rho^{d-2\alpha-1-\delta}\,\mathrm{d}\rho=\frac{C\,r^{-\beta_1(d-2\alpha -\delta)}}{(d-2\alpha-\delta)\left|\Delta \right|}.
\]

Applying Theorem~1.5.3 \cite{bin}  we get
\[I_2\le
\frac{\sup_{r^{1-\beta_1}\le s\le r\cdot diam\left\{ \Delta
    \right\}}s^{\delta}L^{2}\left(s\right)}{r^{\delta}L^{2}(r)}\cdot \sup_{r^{1-\beta_1}\le s\le r\cdot diam\left\{ \Delta
\right\}}\frac{L\left(s\right)}{s^\alpha}\cdot \int_0^{diam\left\{ \Delta \right\}} z^{-(\delta+2\alpha)}  \psi _{\Delta}(z)\,\mathrm{d}z\]
\[\le C\cdot o(r^{-(\alpha-\delta)(1-\beta_1)}),
 \]
when $r$ is sufficiently large.

Notice that by (\ref{par})
\[\sum_{j\geq 3}\frac{C_j ^2}{j !}\le \int_{\mathbb R}G^2(w)\ \phi ( w )\,\mathrm{d}w< +\infty.\]

Hence, for sufficiently large $r$
\[
 \mathbf{Var}\, S_{r} \leq C\,r^{2(d-\alpha)}L^{2}(r)\left( r^{-\beta_1(d-2\alpha-\delta)}
+
o\left(r^{-(\alpha-\delta)(1-\beta_1)}\right)\right).
\]
Choosing $\beta_1=\frac{\alpha}{d-\alpha}$ to minimize the upper bound we get
\[
 \mathbf{Var}\, S_{r} \leq C r^{2(d-\alpha)}L^{2}(r)r^{-\frac{\alpha(d-2\alpha)}{d-\alpha}+\delta}.
\]

It follows from Lemma~\ref{lem3} that 

\[{\rho}\left(X_2(\Delta)+\varepsilon,X_2(\Delta)\right) \le \varepsilon\sup_{z\in \mathbb R}\, p_{X_2(\Delta)}\left( z\right)\le \varepsilon\, C.\]

Applying Chebyshev's inequality and Lemma~\ref{lem1} to $X=X_{r,2},$ $Y=\frac{2\,S_r}{C_2\,r^{d-\alpha}L(r)},$ and $Z=X_{2}(\Delta),$ we get

$${\rho}\left( \frac{2\,K_{r}}{C_2\,r^{d-\alpha}L(r)},X_2(\Delta)\right)={\rho}\left( X_{r,2}+\frac{2\,S_r}{C_2\,r^{d-\alpha}L(r)},X_2(\Delta)\right)$$
\[
\le {\rho}\left(X_{r,2},X_2(\Delta)\right)+C\left(\varepsilon+ \varepsilon^{-2}\,r^{-\frac{\alpha(d-2\alpha)}{d-\alpha}+\delta}\right),
\]
for sufficiently large $r.$

Choosing $\varepsilon:=r^{-\frac{\alpha(d-2\alpha)}{3(d-\alpha)}}$ to minimize the second term we obtain
\begin{equation}\label{up11}
{\rho}\left( \frac{2\,K_{r}}{C_2\,r^{d-\alpha}L(r)},X_2(\Delta)\right)\le {\rho}\left(X_{r,2},X_2(\Delta)\right)+C\,r^{-\frac{\alpha(d-2\alpha)}{3(d-\alpha)}+\delta}.
\end{equation}

Applying Lemma~\ref{lem1} once again to $X=X_{2}(\Delta),$ $Y=X_{r,2}-X_{2}(\Delta),$ and $Z=X_{2}(\Delta)$ we obtain
\begin{eqnarray}
\rho\left(X_{r,2},X_{2}(\Delta) \right) &\leq &\varepsilon_1\,C
+P\left\{ \left|X_{r,2}-X_2(\Delta)\right| \geq \varepsilon_1 \right\}  \notag \\
&\leq &\varepsilon_1\,C +\varepsilon_1^{-2} \mathbf{Var}\left(X_{r,2}-X_2(\Delta)\right).
\label{38}
\end{eqnarray}

Below we show how to estimate $\mathbf{Var}\left(X_{r,2}-X_2(\Delta)\right).$

By the self-similarity of Gaussian white noise and  formula (2.1) \cite{dob}
\begin{eqnarray*}
&&X_{r,2}\stackrel{\mathcal{D}}{=}
c_2(d,\alpha)\int_{\mathbb{R}^{2d}}^{{\prime }}K_{\Delta}(\lambda _1+\lambda _2)Q_r(\lambda _1,\lambda _2)\frac{W(\mathrm{d}\lambda
_1) W(\mathrm{d}\lambda _2)}{\left\| \lambda _1\right\| ^{(d-\alpha )/2}
\left\| \lambda _2\right\| ^{(d-\alpha )/2}},
\end{eqnarray*}
where
\[
Q_r(\lambda _1,\lambda _2): =r^{\alpha-d}L^{-1}(r)\
c_2^{-1}(d,\alpha)  \left[ \left\| \lambda _1\right\|^{d-\alpha} \left\| \lambda _2\right\|^{d-\alpha} f\left( \frac{\left\| \lambda _1\right\| }r\right) f\left( \frac{\left\| \lambda _2\right\| }r\right)\right] ^{1/2}.
\]

Notice that
\[
X_2(\Delta) =c_2(d,\alpha )  \int_{\mathbb{R}^{2d}}^{{\prime }}K_{\Delta}(\lambda _1+\lambda _2) \frac{W(d\lambda _1) W(d\lambda _2)}{\left\| \lambda
_1\right\| ^{(d-\alpha )/2} \left\| \lambda _2\right\| ^{(d-\alpha )/2}}.\]

By the isometry property of multiple stochastic integrals

\[R_r:=\frac{\mathbb{E}\left| X_{r,2}-X_2(\Delta)\right|^2}{c_2^{2}(d,\alpha)}=\int_{\mathbb{R}^{2d}}\frac{|K_{\Delta}(\lambda _1+\lambda _2)|^2\left(Q_r(\lambda_1,\lambda_2)-1\right)^2}{\left\| \lambda _1\right\| ^{d-\alpha}\left\| \lambda _2\right\| ^{d-\alpha}}\,\mathrm{d}\lambda
_1 \,\mathrm{d}\lambda _2. \]

Let us rewrite the integral $R_r$ as the sum of two integrals $I_3$ and $I_4$ with the regions $A(r):=\{(\lambda _1,\lambda _2)\in\mathbb{R}^{2d}:\ \max(||\lambda _1||, ||\lambda _2||)\le r^\gamma\}$ and $\mathbb{R}^{2d}\setminus A(r)$ respectively, where $\gamma\in(0,1).$ Our intention is to use the monotone equivalence property of regularly varying functions in the regions $A(r).$

First we consider the case of $(\lambda _1,\lambda _2)\in A(r).$  By Assumption~\ref{ass2} and the inequality $|\sqrt{ab}-1|\le |1-a|+|1-b|,$ for sufficiently large $r,$ we obtain

\[
|Q_r(\lambda _1,\lambda _2)-1| \le \left|1-\frac{L\left( \frac{r}{\left\| \lambda _1\right\| }\right)}{L(r)} \right| + \left|1-\frac{L\left( \frac{r}{\left\| \lambda _2\right\| }\right)}{L(r)} \right| + C\frac{L\left( \frac{r}{\left\| \lambda _1\right\| }\right)}{L(r)} \left( \frac{\left\| \lambda _1\right\| }r\right)^{\upsilon}\]
\[ +\, C\,\frac{L\left( \frac{r}{\left\| \lambda _2\right\| }\right)}{L(r)} \left( \frac{\left\| \lambda _2\right\| }r\right)^{\upsilon}.
\]

By Lemma~\ref{lem0}, if $||\lambda _j||\in (1, r^\gamma),$ $j=1,2,$ then for arbitrary $\beta_2>0$ and sufficiently large $r$ we get
\[\left|1-\frac{L\left( \frac{r}{\left\| \lambda _j\right\| }\right)}{L(r)} \right|=\frac{L\left( \frac{r}{\left\| \lambda _j\right\| }\right)}{L(r)} \cdot\left|1-\frac{L(r)}{L\left( \frac{r}{\left\| \lambda _j\right\| }\right)} \right|\le C\,\frac{L\left( \frac{r}{\left\| \lambda _j\right\| }\right)}{L(r)} \cdot \frac{\left\| \lambda _j\right\|^{q}}{r^q} \]
\[\le C\,  \frac{\left\| \lambda _j\right\|^{q+\beta_2}}{r^q}\cdot\frac{\sup_{||\lambda _j||\in (1, r^\gamma)}\left( \frac{r}{\left\| \lambda _j\right\| }\right)^{\beta_2} L\left( \frac{r}{\left\| \lambda _j\right\| }\right)}{r^{\beta_2}\,L(r)}
\le C\,  \frac{\left\| \lambda _j\right\|^{q+{\beta_2}}}{r^q}\]
\begin{equation}\label{gr1}\times\frac{\sup_{z\in (0,r)} z^{\beta_2} L\left( z\right)}{r^{\beta_2}\,L(r)}
 \le C\,  \frac{\left\| \lambda _j\right\|^{q+{\beta_2}}}{r^q}. \end{equation}

By  Lemma~\ref{lem0} for $||\lambda _j||\le 1,$ $j=1,2,$ we obtain
\begin{equation}\label{gr2}
\left|1-\frac{L\left( \frac{r}{\left\| \lambda _j\right\| }\right)}{L(r)} \right|\le \frac{C}{r^q}.
\end{equation}

Hence, by  (\ref{gr1}) and (\ref{gr2})
\[
|Q_r(\lambda _1,\lambda _2)-1|^2 \le C \left( \frac{\left\| \lambda _1\right\| }r\right)^{2\upsilon}\cdot\frac{L^2\left( \frac{r}{\left\| \lambda _1\right\| }\right)}{L^2(r)} + C\left( \frac{\left\| \lambda _2\right\| }r\right)^{2\upsilon}\cdot\frac{L^2\left( \frac{r}{\left\| \lambda _2\right\| }\right)}{L^2(r)} 
\]
\[+\,C\,r^{-2q}  \left(\left\| \lambda _1\right\|^{(\mu_1+1)(q+{\beta_2})}+ \left\| \lambda _2\right\|^{(\mu_2+1)(q+{\beta_2})}\right)
\]
for $(\lambda _1,\lambda _2)\in A(r)\cap B_\mu,$ where
\[B_\mu :=\{(\lambda _1,\lambda _2)\in\mathbb{R}^{2d}: ||\lambda_j||\le 1,\ \mbox{if}\ \mu_j=-1,\ \mbox{and}\ ||\lambda_j||> 1, \ \mbox{if}\ \mu_j=1, j=1,2\}, \]
$\mu=(\mu_1,\mu_2)\in \{-1,1\}^2$ is a binary vector of length $2.$

By Lemma~\ref{finint}, for  $r>1$
\[r^{-2q} \int_{A(r)\cap B_{(\mu_1,-1)}}\frac{|K_{\Delta}(\lambda _1+\lambda _2)|^2\,\mathrm{d}\lambda
_1 \mathrm{d}\lambda _2 }{\left\| \lambda _1\right\| ^{d-\alpha}\left\| \lambda _2\right\| ^{d-\alpha}}\le \frac{C}{r^{2q}},\]
when $\mu_1\in \{-1,1\}.$

Similarly, by Lemma~\ref{finint}  we obtain 
\[r^{-2q} \int_{A(r)\cap B_{(\mu_1,1)}}\frac{|K_{\Delta}(\lambda _1+\lambda _2)|^2\,\mathrm{d}\lambda
_1 \mathrm{d}\lambda _2 }{\left\| \lambda _1\right\| ^{d-\alpha}\left\| \lambda _2\right\| ^{d-\alpha-2q-2\beta_2}}\le \frac{C}{r^{2q}},\]
when  $\mu_1\in \{-1,1\},$ $q \in (0,d/2-\alpha),$ $\alpha>0,$ and $\beta_2$ is sufficiently small.

By  properties of slowly varying functions   \cite[Theorem~1.5.3]{bin}
\[\lim_{r\to \infty}\frac{\sup_{||\lambda _j||\le 1}\left(\frac{r}{||\lambda _j||}\right)^{-\upsilon}L\left(\frac{r}{||\lambda _j||}\right)}{r^{-\upsilon}L(r)}=\lim_{r\to \infty}\frac{\sup_{z\ge r}z^{-\upsilon}L\left(z\right)}{r^{-\upsilon}L(r)}=1.\]
Hence, it holds for sufficiently large $r$ that 
\[ \left( \frac{\left\| \lambda _j\right\| }r\right)^{2\upsilon}\cdot\frac{L^2\left( \frac{r}{\left\| \lambda _j\right\| }\right)}{L^2(r)}\le C\, r^{-2\upsilon}.\]

Therefore, by Lemma~\ref{finint}  we obtain for sufficiently large $r$
\[ I_3\le C\,r^{-2q}\sum_{\mu\in \{-1,1\}^2} \int_{A(r)\cap B_\mu}\frac{|K_{\Delta}(\lambda _1+\lambda _2)|^2\,\mathrm{d}\lambda
_1 \mathrm{d}\lambda _2 }{\left\| \lambda _1\right\| ^{d-\alpha}\left\| \lambda _2\right\| ^{d-\alpha-(\mu_2+1)(q+{\beta_2})}}\]
\begin{equation}\label{del}+\, C\sup_{\mu_2\in \{-1,1\}}
\sup_{\left\| \lambda _2\right\|\le r^\gamma}\frac{\left\| \lambda _2\right\|^{2\upsilon+\mu_2\delta}}{r^{2\upsilon}}
\le C\,r^{-2q}+ C\,r^{-2\upsilon(1-\gamma)+\delta}.
\end{equation}

It follows from Assumption~\ref{ass2} and the specification  of the estimate (23)  in the proof of Theorem~5 \cite{mink} for $k=2$ that for each positive $\delta$ there exists $r_0>0$ such that for all $r\ge r_0,$   $(\lambda _1,\lambda _2)\in B_{(1,\mu_2)},$ and $\mu_2\in \{-1,1\},$ it holds
\[
\frac{|K_{\Delta}(\lambda _1+\lambda _2)|^2\left(Q_r(\lambda _1,\lambda _2)-1\right)^2}{\left\| \lambda _1\right\| ^{d-\alpha}
\left\| \lambda _2\right\| ^{d-\alpha}}\le \frac{C\, |K_{\Delta}(\lambda _1 +\lambda _2)|^2}{\left\| \lambda _1\right\| ^{d-\alpha}
\left\| \lambda _2\right\| ^{d-\alpha}}
+C\,\frac{|K_{\Delta}(\lambda _1+\lambda _2)|^2}{\left\| \lambda _1\right\| ^{d-\alpha-\delta}
\left\| \lambda _2\right\| ^{d-\alpha-\mu_2\delta}}.
\]

Hence, we can estimate $I_4$ as shown below
\[ I_4\le 2\int_{\mathbb{R}^{d}}\int_{||\lambda _1||> r^\gamma}\frac{|K_{\Delta}(\lambda _1+\lambda _2)|^2\left(Q_r(\lambda_1,\lambda_2)-1\right)^2\mathrm{d}\lambda
_1 \mathrm{d}\lambda _2}{\left\| \lambda _1\right\| ^{d-\alpha}\left\| \lambda _2\right\| ^{d-\alpha}}\]
\[ \le C\int_{\mathbb{R}^{d}}\int_{||\lambda _1||> r^\gamma}\frac{|K_{\Delta}(\lambda _1+\lambda _2)|^2\,\mathrm{d}\lambda
_1 \mathrm{d}\lambda _2}{\left\| \lambda _1\right\| ^{d-\alpha}\left\| \lambda _2\right\| ^{d-\alpha}}\]
\[+\,  C\sum_{\mu_2\in\{1,-1\}}\int_{||\lambda_2||^{\mu_2}\ge 1}\int_{||\lambda _1||> r^\gamma}\frac{|K_{\Delta}(\lambda _1+\lambda _2)|^2}{\left\| \lambda _1\right\| ^{d-\alpha-\delta}\left\| \lambda _2\right\| ^{d-\alpha-\mu_2\delta}}\,\mathrm{d}\lambda
_1 \mathrm{d}\lambda_2\]
\[\le C\max_{\mu_2\in\{0,1,-1\}}\int_{\mathbb{R}^{d}}\int_{||\lambda _1||> r^\gamma}\frac{|K_{\Delta}(u)|^2}{\left\| \lambda _1\right\| ^{d-\alpha-\delta}\left\|u- \lambda _1\right\| ^{d-\alpha-\mu_2\delta}}\,\mathrm{d}\lambda
_1 \mathrm{d}u\]
\[= C\max_{\mu_2\in\{0,1,-1\}}\int_{\mathbb{R}^{d}}
\frac{|K_{\Delta}(u)|^2}{\left\| u\right\| ^{d-2\alpha-(\mu_2+1)\delta}}
\int_{\|\lambda _1\|> \frac{r^\gamma}{\|u\|}}\frac{\,\mathrm{d}\lambda
_1\mathrm{d}u}{\left\| \lambda _1\right\| ^{d-\alpha-\delta}\left\|\frac{u}{\|u\|}- \lambda _1\right\| ^{d-\alpha-\mu_2\delta}}.\]

Taking into account that for $\delta\in (0,\min(\alpha,{d}/{2}-\alpha))$
\[\sup_{u\in\mathbb{R}^{d}\setminus\{0\}}\int_{\mathbb{R}^{d}}\frac{\,\mathrm{d}\lambda
_1}{\left\| \lambda _1\right\| ^{d-\alpha-\delta}\left\|\frac{u}{\|u\|}- \lambda _1\right\| ^{d-\alpha-\mu_2\delta}}\le C,\]
we obtain
\[I_4\le C\max_{\mu_2\in\{0,1,-1\}}\int_{\|u\|\le r^{\gamma_0}}
\frac{|K_{\Delta}(u)|^2}{\left\| u\right\| ^{d-2\alpha-(\mu_2+1)\delta}}
\int_{\|\lambda _1\|> r^{\gamma-\gamma_0}}\frac{\,\mathrm{d}\lambda
_1\mathrm{d}u}{\left\| \lambda _1\right\| ^{d-\alpha-\delta}\left\|\frac{u}{\|u\|}- \lambda _1\right\| ^{d-\alpha-\mu_2\delta}}\]
\[+\, C\max_{\mu_2\in\{0,1,-1\}}\int_{\|u\|> r^{\gamma_0}}
\frac{|K_{\Delta}(u)|^2\,\mathrm{d}u}{\left\| u\right\| ^{d-2\alpha-(\mu_2+1)\delta}},
\]
where $\gamma_0\in (0,\gamma).$

By Lemma~\ref{finint}, there exists $r_0>0$ such that for all $r\ge r_0$ the first summand is bounded by
\[C\max_{\mu_2\in\{0,1,-1\}}\int_{\mathbb{R}^{d}}
\frac{|K_{\Delta}(u)|^2\,\mathrm{d}u}{\left\| u\right\| ^{d-2\alpha-(\mu_2+1)\delta}}
\int_{\|\lambda _1\|> r^{\gamma-\gamma_0}}\frac{\,\mathrm{d}\lambda
_1}{\left\| \lambda _1\right\|^{2d-2\alpha-\delta-\mu_2\delta}}\le Cr^{-(\gamma-\gamma_0)(d-2\alpha-2\delta)}.\]

Therefore, for sufficiently large $r,$
\begin{equation}\label{+} I_4\le Cr^{-(\gamma-\gamma_0)(d-2\alpha-2\delta)}+C\int_{\|u\|> r^{\gamma_0}}
\frac{|K_{\Delta}(u)|^2\,\mathrm{d}u}{\left\| u\right\| ^{d-2\alpha-2\delta}}.
\end{equation}

By the spherical $L_2$-average decay rate of the Fourier transform \cite{bra} for $\delta<d+1-2\alpha$ and sufficiently large $r$ we get the following estimate of the integral in (\ref{+})
\[\int_{\|u\|> r^{\gamma_0}}
\frac{|K_{\Delta}(u)|^2\,\mathrm{d}u}{\left\| u\right\| ^{d-2\alpha-2\delta}}\le 
C\int_{z> r^{\gamma_0}}\int_{S^{d-1}}
\frac{|K_{\Delta}(z\omega)|^2}{z^{1-2\alpha-2\delta}}\,\mathrm{d}\omega \mathrm{d}z\]
\begin{equation}\label{uppp}
\le C\int_{z> r^{\gamma_0}} \frac{\,\mathrm{d}z}{z^{d+2-2\alpha-2\delta}}=
C\,r^{-\gamma_0(d+1-2\alpha-2\delta)},
\end{equation}

where $S^{d-1}:=\{x\in \mathbb{R}^{d}:\left\Vert x\right\Vert =1\}$  is a sphere
of radius 1 in $\mathbb{R}^d.$

Combining estimates (\ref{up11}), (\ref{38}), (\ref{del}), (\ref{+}), (\ref{uppp}), and choosing $\varepsilon_1:=r^{-\beta},$  we obtain
\[{\rho}\left( \frac{2\,K_{r}}{C_2\,r^{d-\alpha}L(r)},X_2(\Delta)\right)\le C\left(r^{-\frac{\alpha(d-2\alpha)}{3(d-\alpha)}+\delta}+ r^{-\beta} +
r^{-2\upsilon(1-\gamma)+2\beta+\delta}+\,r^{-2q+2\beta}\right.\]
\[ \left.+r^{-(\gamma-\gamma_0)(d-2\alpha-2\delta)+2\beta}+r^{-\gamma_0(d+1-2\alpha-2\delta)+2\beta}\right).
\]
Therefore, for any  $\tilde\varkappa_1\in (0,3\varkappa_0)$ one can choose a sufficiently small $\delta>0$ such that 
\begin{equation}\label{bou}
{\rho}\left( \frac{2\,K_{r}}{C_2\,r^{d-\alpha}L(r)},X_2(\Delta)\right)\le Cr^\delta\left(r^{-\frac{\alpha(d-2\alpha)}{3(d-\alpha)}}+ r^{-\frac{\tilde\varkappa_1}{3}}\right),
\end{equation}
where  \[\varkappa_0:=\sup_{\stackrel{\beta>0}{\stackrel{\gamma\in(0,1)}{\gamma_0\in (0,\gamma)}}}\min\left(\beta,
2\upsilon(1-\gamma)-2\beta,2q-2\beta,(\gamma-\gamma_0)(d-2\alpha)-2\beta,
\gamma_0(d+1-2\alpha)-2\beta\right).
\]
Note, that for fixed $\gamma\in(0,1)$
\[\sup_{\gamma_0\in (0,\gamma)}\min\left((\gamma-\gamma_0)(d-2\alpha),
\gamma_0(d+1-2\alpha)\right)=\frac{\gamma(d-2\alpha)(d+1-2\alpha)}{2d+1-4\alpha},\]
and 
\[\sup_{\gamma\in(0,1)}\min\left(2\upsilon(1-\gamma),\frac{\gamma(d-2\alpha)(d+1-2\alpha)}{2d+1-4\alpha}\right)=2\left(\frac{2}{d-2\alpha}+\frac{2}{d+1-2\alpha}+\frac{1}{\upsilon}\right)^{-1}.\]

Thus,  $\varkappa_0=\sup_{\beta>0}\min\left(\beta,
\varkappa_1-2\beta\right)=\varkappa_1/3.$

Finally, by (\ref{bou}) and $\tilde\varkappa_1<\varkappa_1$ we obtain the statement of the theorem. 
\end{proof}

\begin{rem} The obtained results and the methods of \cite[\S 5.5]{pet} provide a theoretical framework for generalizations  to non-uniform estimates of the remainder in the non-central limit theorem.\end{rem}

\section{Examples}\label{sec4}
Theorem~\ref{rate} was proven under rather general assumptions. In  this section we present some examples and specifications of the  results of Sections~\ref{sec2} and \ref{sec3}.

\begin{example}
 If $\Delta $ is the ball $v(1)$ in $\mathbb{R}^{d},$ then
\[
K_{v(1)}(x)=\int_{v(1)}e^{i(x,u)} du=(2\pi )^{d/2}
\frac{J_{d/2}(\|x\|)}{\left\|x\right\|^{d/2}},\quad
x\in\mathbb{R}^d,
\]
and  we obtain
\[
X_2(v(1))=(2\pi )^{d/2}c_2(d,\alpha )\int_{\mathbb{R}^{2d}}^{{\prime }}\frac{J_{d/2}(\left\| \lambda _1
+\lambda _2\right\| )}{\left\| \lambda _1 +\lambda _2\right\|
^{d/2}} \frac{W(\mathrm{d}\lambda _1) W(\mathrm{d}\lambda _2)}{\left\| \lambda
_1\right\| ^{(d-\alpha )/2} \left\| \lambda _2\right\| ^{(d-\alpha )/2}%
}\,.\]
\end{example}
\begin{example} If $\Delta $ is the multidimensional
rectangle   $[\mathbf{a},\mathbf{b}]:=\{x\in \mathbb{R
}^{d}: x_j\in [a_j,b_j],\ j=1,...,d\}$ and $a_j<0<b_j,$ $j=1,...,d,$ then
\[
K_{[\mathbf{a},\mathbf{b}]}\left( x \right)=\int_{[\mathbf{a},\mathbf{b}]}e^{i(x,u)} du
=\prod_{j=1}^{d}\frac{e^{ib_{j}x_{j}}-e^{ia_{j}x_{j}}}{%
ix_{j}}
\]
and
\[
X_2([\mathbf{a},\mathbf{b}])=c_2(d,\alpha )\int_{\mathbb{R}^{2d}}^{{\prime }}
\prod_{j=1}^{d}\frac{\left(e^{ib_{j}(\lambda_{1j}
+\lambda_{2j})}-e^{ia_{j}(\lambda_{1j}
+\lambda_{2j})}\right)}{%
i(\lambda_{1j}
+\lambda_{2j})} \frac{W(\mathrm{d}\lambda_{1})\, W(\mathrm{d}\lambda_{2})}{\left\| \lambda
_1\right\| ^{(d-\alpha )/2} \left\| \lambda _2\right\| ^{(d-\alpha )/2}}
,\]
where $\lambda_{m}=\left(\lambda_{m1},...,\lambda_{md}\right),$ $m=1,2.$
\end{example}

\begin{example}\label{ex5} Let us consider $\eta (x),$ $x\in\mathbb{R}^d,$ with the covariance function of the form
\begin{equation}\label{lgm}
B\left( \left\Vert x\right\Vert\right) =
\begin{cases} 1-\frac{\alpha}{\theta+\alpha}\left\Vert x\right\Vert^\theta,& \left\Vert x\right\Vert\le 1;\\
\frac{\theta}{\theta+\alpha}\left\Vert x\right\Vert^{-\alpha},& \left\Vert x\right\Vert > 1,
\end{cases}
\end{equation}
which was proposed as a local-global distinguisher model in \cite{gne}. It was shown in \cite{gne} that (\ref{lgm}) is a valid correlation function when $\alpha>0,$ $\theta\in(0,(3-d)/2],$ $d=1,2.$

The local-global distinguisher model obviously satisfies Assumption~\ref{ass1} and the condition~(\ref{gr}) with $L(t)=\frac{\theta}{\theta+\alpha},$ $t>1.$ 

In the case of long-memory stochastic processes, i.e. $d=1$ and $\alpha\in (0,1/2),$ taking the inverse Fourier transform of $B(\cdot)$ we obtain 

\[f(\left\| \lambda \right\|)=\frac{1}{{\pi }} \left(\frac{\sin \left( \left\| \lambda \right\| \right) }{\left\| \lambda \right\|}+ \frac{\theta}{\theta+\alpha}\left((\alpha-1)^{-1} \,{}_1F_2\left(\frac{1}{2}-\frac{\alpha}{2};\frac{1}{2},\frac{3}{2}-\frac{\alpha}{2};-\frac{\left\| \lambda \right\|^2}{4}\right)\right.\right.\]
\[\left.\left.+\left\| \lambda \right\|^{\alpha-1} \sin \left({\pi  \alpha}/{2}\right) \Gamma (1-\alpha)\right)-\frac{\alpha}{(\theta+1) (\theta+\alpha)}  \,{}_1F_2\left(\frac{\theta}{2}+\frac{1}{2};\frac{1}{2},\frac{\theta}{2}+\frac{3}{2};-\frac{\left\| \lambda \right\|^2}{4}\right)\right),
\]
where ${}_1F_2(a;b_1,b_2;z)$ is the generalized hypergeometric function \cite[\S 9.14]{gra} defined by 
\[{}_1F_2(a;b_1,b_2;z)=\sum_{j=0}^\infty\frac{(a)_j}{j!\,(b_1)_j(b_2)_j}\,z^j\]
$(a)_j=\Gamma(a+j)/\Gamma(a),$ $j\in\mathbb{N}_0:=\mathbb{N}\cup \{0\},$ $-a\not\in\mathbb{N}_0.$


The power series expansion of $f(\left\| \lambda \right\|)$ gives
\[f(\left\| \lambda \right\|)= \frac { \theta\,\Gamma  \left(1- \alpha \right)\sin \left( \pi \alpha/2 \right) }{\pi(\theta+\alpha)}\left\| \lambda \right\|^{\alpha-1}+\frac{\alpha\,\theta}{\pi(1+\theta)(\alpha-1)}+{\cal O}(1)\]
\[= c_2(1,\alpha )\cdot\frac{\theta}{\theta+\alpha}\cdot\left\| \lambda \right\|^{\alpha-1}+\left\| \lambda \right\|^{\alpha-1}\cdot{\cal O}(\left\| \lambda \right\|^{1-\alpha}),\]
when $\left\| \lambda \right\|\to 0.$
Hence,   $\upsilon=1-\alpha.$

Now we consider the case $\left\| \lambda \right\| \to +\infty.$ By the asymptotic expansion of ${}_1F_2\left(a;b_1,b_2;z\right)$ in trigonometric form \cite{wol} we obtain
\[\,{}_1F_2\left(\frac{1}{2}-\frac{\alpha}{2};\frac{1}{2},\frac{3}{2}-\frac{\alpha}{2};-\frac{\left\| \lambda \right\|^2}{4}\right)=\frac{\sqrt{\pi}\,\Gamma\left(\frac{3}{2}-\frac{\alpha}{2}\right)}{2^{\alpha-1}\Gamma\left(\alpha/2\right)}\,\left\| \lambda \right\|^{\alpha-1}\left(1+{\cal O}\left(\left\| \lambda \right\|^{-2}\right)\right)+{\cal O}\left(\left\| \lambda \right\|^{-1}\right),
\]

\[\,{}_1F_2\left(\frac{\theta}{2}+\frac{1}{2};\frac{1}{2},\frac{\theta}{2}+\frac{3}{2};-\frac{\left\| \lambda \right\|^2}{4}\right)={\cal O} \left(\left\| \lambda \right\|^{-1}\right).\]

Therefore, $\frac{\sqrt{\pi}\,\Gamma\left(\frac{3}{2}-\frac{\alpha}{2}\right)}{2^{\alpha-1}(1-\alpha)\Gamma\left(\alpha/2\right)}=\sin \left({\pi  \alpha}/{2}\right) \Gamma (1-\alpha)$ implies $f(\left\| \lambda \right\|)={\cal O}\left(1/\left\| \lambda \right\|\right),$ when $\left\| \lambda \right\| \to +\infty.$
By Remark~\ref{rem6}  Assumption~{\rm\ref{ass2}} holds true for the local-global distinguisher processes. 
%
%
\end{example}
\begin{example}\label{ex6} Assume there exists  $t_0>0$ such that $L(t)=a_0$ for all $t\ge t_0$ in Assumption~\ref{ass2}. 

Since the parameter $q $ is arbitrary in (\ref{gr}), the only condition on $q$ in Theorem~\ref{rate} is $q \in (0,{d}/{2}-\alpha).$
As a consequence, taking $q$ arbitrarily close to ${d}/{2}-\alpha$  makes
$q>\left(2{(d-2\alpha)^{-1}}+2(d+1-2\alpha)^{-1}+\upsilon^{-1}\right)^{-1},$ because $\left(2{(d-2\alpha)^{-1}}+2(d+1-2\alpha)^{-1}+\upsilon^{-1}\right)^{-1}$ $<{d}/{2}-\alpha.$ Thus, we get in the
definition of $\varkappa_1$ in Theorem~\ref{rate}
\begin{equation}\label{kappa_1}\varkappa_1=2\left(2{(d-2\alpha)^{-1}}+2(d+1-2\alpha)^{-1}+\upsilon^{-1}\right)^{-1}.
\end{equation}

For  instance, let us consider Example~\ref{ex5} with $d=1$ and $v=1-\alpha.$ Figure~\ref{fig1} displays the graphs of $\frac{\alpha(d-2\alpha)}{3(d-\alpha)}$ and $\varkappa_1/3,$ plotted as  functions of the variable $\alpha.$  In this case \[\varkappa<\frac{1}{3}\min\left(\frac{\alpha(1-2\alpha)}{1-\alpha},\varkappa_1\right)=\frac{\alpha(1-2\alpha)}{3(1-\alpha)}.\]

\begin{figure}[h]
\centering
\includegraphics[width=0.6\linewidth,height=0.4\linewidth]{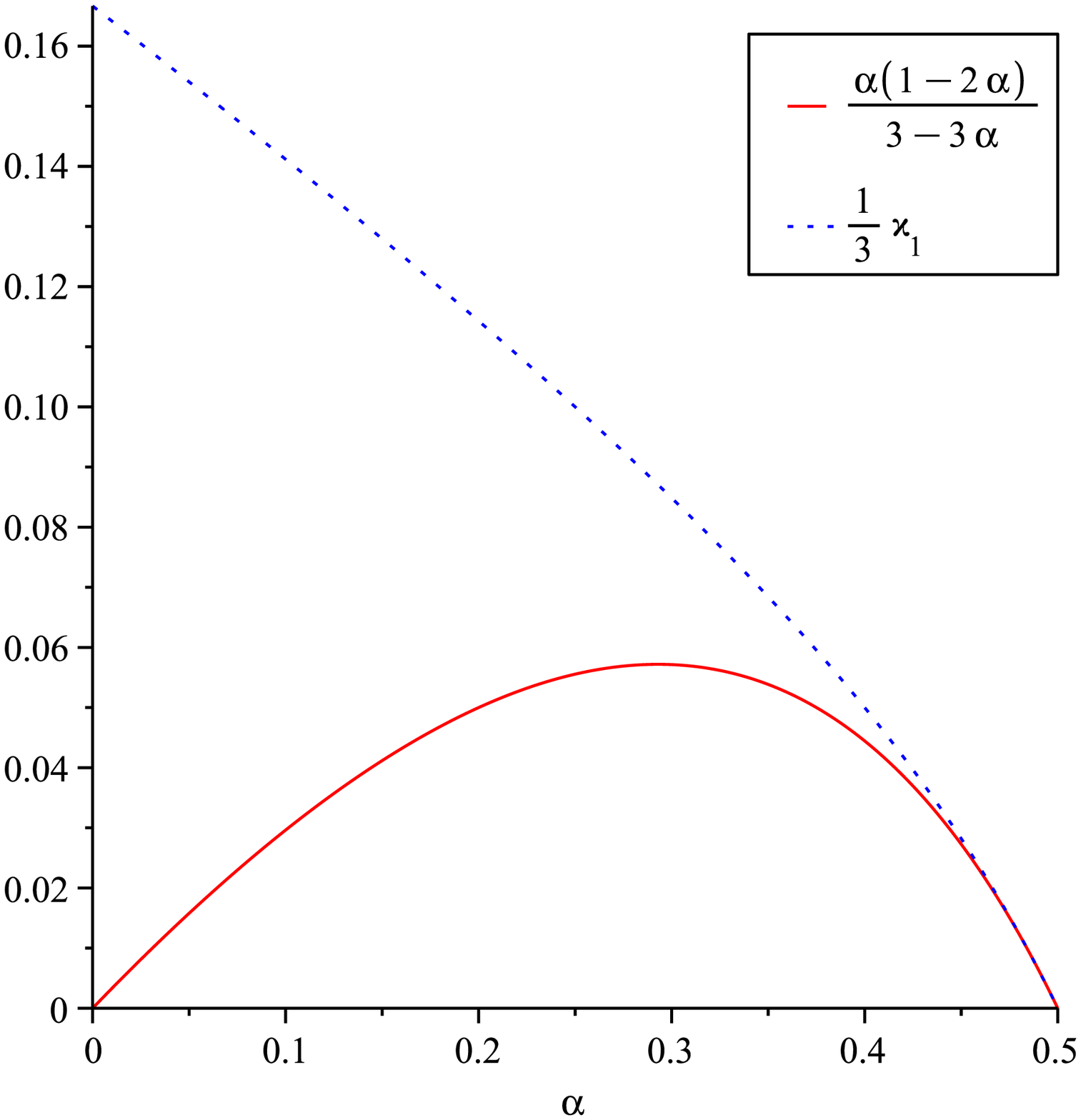}
\caption{Graphs of $\varkappa_1/3$ and $\frac{\alpha(1-2\alpha)}{3(1-\alpha)}$ for $d=1.$}
\label{fig1}
\end{figure}
\end{example}

\begin{example}\label{ex3}
Let us consider  $\eta (x),$ $x\in\mathbb{R}^d,$ with the covariance function of the form
\[
B\left( \left\Vert x\right\Vert\right) =\left( 1+\left\Vert x\right\Vert^{2}\right)
^{-\theta},\quad  \theta >0.
\]
In geostatistics, it is known as the Cauchy covariance function \cite{gne,lim1, wac}.

The corresponding spectral density  has the form, see \cite[Proposition 2.4]{lim1},
\begin{equation}\label{den1}
f\left( \left\| \lambda \right\|\right) =\frac{\left\| \lambda \right\|^{\theta-\frac{d}{2}}}{2^{\frac{d}{2}+\theta-1}\pi^\frac{d}{2} \Gamma(\theta)}\,
K_{\frac{d}{2}-\theta}(\left\| \lambda \right\|),
\end{equation}
where $K_\mu(\cdot)$ is the modified Bessel function of the second kind.

It follows from the representation
$
B\left( \left\Vert x\right\Vert\right) =\left\Vert x\right\Vert^{-2\theta}\left( 1+\left\Vert x\right\Vert^{-2}\right)
^{-\theta}
$
that, in the notations of the paper, $\alpha=2\theta,$   $L(t)=\left( 1+t^{-2}\right)^{-\theta},$ and the  Cauchy covariance function satisfies Assumption~\ref{ass1}. The considered case of long-range dependence corresponds to the region $0<\theta<d/4.$   By Remark~\ref{rem5} the slowly varying function $L(\cdot)$ satisfies the condition~(\ref{gr}) for $q<\min(2,{d}/{2}-\alpha).$

To verify the condition~(\ref{f}) we note that
\[
f(\left\| \lambda \right\| )=  c_2(d,2\theta)\left\| \lambda \right\| ^{2\theta
-d}\left( 1+\left\| \lambda \right\|^{2}\right)^{-\theta}+\varepsilon(\left\| \lambda \right\|),\]
where
\begin{equation}\label{eps}
\varepsilon(\left\| \lambda \right\|)=f(\left\| \lambda \right\| )-c_2(d,2\theta )\left\| \lambda \right\| ^{2\theta
-d}\left( 1+\left\| \lambda \right\|^{2}\right)^{-\theta}.
\end{equation}

We will use the asymptotic expansions of the functions $\left( 1+\left\| \lambda \right\|^{2}\right)^{-\theta}$  and $f(\left\| \lambda \right\| )$ for
$\left\| \lambda \right\|\to 0.$ 
By the binomial series expansion we get

\begin{equation}\label{bin}\left( 1+\left\| \lambda \right\|^{2}\right)^{-\theta}=\sum_{j=0}^\infty\binom{-\theta}{j}\left\| \lambda \right\|^{ 2j}=1-\theta\left\| \lambda \right\|^{2}+\frac{\theta(\theta+1)}{2}\left\| \lambda \right\|^{4}+...\,.\end{equation}

Note that the condition of long-range dependence $\theta\in (0,d/4)$ implies positivity of $d/2-\theta.$ First we consider the case  $d/2-\theta\not\in\mathbb{N}.$ 
By the asymptotic expansions of $f(\left\| \lambda \right\| )$ given in \cite[Proposition 3.2]{lim1} and Euler's reflection formula
$\Gamma(z)\Gamma(1-z) = {\pi}/{\sin{(\pi z)}}$   we obtain

\[f(\left\| \lambda \right\| )=\frac{1}{2^{d}\pi^\frac{d-2}{2} \Gamma(\theta)\sin\left(\pi\left(\frac{d}{2}-\theta\right)\right)}\sum_{j=0}^{\infty}\left(\frac{(\left\| \lambda \right\|/2)^{2j+2\theta-d}}{j!\,\Gamma\left(j+\theta-\frac{d-2}{2}\right)}-
\frac{(\left\| \lambda \right\|/2)^{2j}}{j!\,\Gamma\left(j+\frac{d+2}{2}-\theta\right)}\right)\]
\[=\frac{1}{2^{d}\pi^\frac{d-2}{2} \Gamma(\theta)\sin\left(\pi\left(\frac{d}{2}-\theta\right)\right)}\left(\frac{(\left\| \lambda \right\|/2)^{2\theta-d}}{\Gamma\left(\theta-\frac{d-2}{2}\right)}+\frac{(\left\| \lambda \right\|/2)^{2+2\theta-d}}{\Gamma\left(1+\theta-\frac{d-2}{2}\right)}+...\right.\]
\[\left.-\frac{1}{\Gamma\left(\frac{d+2}{2}-\theta\right)}-...\right)=c_2(d,2\theta)\left\| \lambda \right\|^{2\theta-d}\left(1+\frac{\left\| \lambda \right\|^2}{2\left(2+2\theta-d\right)}+...\right.\]
\begin{equation}\label{f2}\left.-\frac{2^{2\theta-d}\Gamma\left(\theta-\frac{d-2}{2}\right)}{\Gamma\left(\frac{d+2}{2}-\theta\right)}\left\| \lambda \right\|^{d-2\theta}-...\right).\end{equation}

Therefore, by the substitution of (\ref{bin}) and (\ref{f2}) into (\ref{eps})  for $d/2-\theta\not\in\mathbb{N}$  we obtain  $\varepsilon(t)=t^{2\theta
-d}L\left( 1/t\right)\cdot{\cal O}\left(t^\upsilon\right),$ $t\to 0,$ where
\begin{equation}\label{up0}\upsilon= \min_{\theta<d/4}(2,d-2\theta)=
\begin{cases}d-2\theta,& \text{if}\ d=1, 2,\ \text{or}\ d=3\ \text{and}\ \theta\in (1/2,3/4);\\
2,& \text{if}\ d\ge 4,\  \text{or}\ d=3\ \text{and}\ \theta\in(0,1/2).
\end{cases}
\end{equation}

Now we investigate the case $l:=d/2-\theta\in\mathbb{N}.$
Proposition 3.2 \cite{lim1} implies 

\[f(\left\| \lambda \right\| )=\frac{1}{2^{d}\pi^\frac{d}{2} \Gamma(\frac{d}{2}-l)}\left(\sum_{j=0}^{l-1}(-1)^j\frac{(l-j-1)!(\left\| \lambda \right\|/2)^{2j-2l}}{j!}+
(-1)^{l+1}\right.\]
\[\times\sum_{j=0}^{\infty}\frac{(\left\| \lambda \right\|/2)^{2j}}{j!(l+j)!}\left(2\ln(\left\| \lambda \right\|/2)-\psi(j+1)-\psi(l+j+1)\right)
\Bigg)=\frac{1}{2^{d}\pi^\frac{d}{2} \Gamma(\frac{d}{2}-l)}\]
\[\times\left(\frac{2^{2l}(l-1)!}{\left\| \lambda \right\|^{2l}}-\frac{2^{2l-2}(l-2)!}{\left\| \lambda \right\|^{2l-2}}+...\right.\left.+(-1)^{l+1}\frac{2\ln(\left\| \lambda \right\|/2)-\psi(1)-\psi(l+1)}{l!}+...\right)\]
\[=c_2(d,2\theta)\left\| \lambda \right\|^{2\theta-d}\left(1+\frac{\left\| \lambda \right\|^2}{2\left(2+2\theta-d\right)}+...+\frac{(-1)^{d/2+1-\theta}\,\ln(\left\| \lambda \right\|)\,\left\| \lambda \right\|^{d-2\theta}}{2^{d-2\theta-1}(\frac{d}{2}-1-\theta)!(\frac{d}{2}-\theta)!}+...\right),\]
where $\psi(z):={\Gamma'(z)}/{\Gamma(z)}$ is the digamma function.

Hence, for  $d/2-\theta\in\mathbb{N}$ we get  
\begin{equation}\label{up1}\upsilon= 
\begin{cases}2-\delta,&\text{if}\ \ \theta=1/2,\, d=3;\\
2,& \text{otherwise},
\end{cases}
\end{equation}
where $\delta$ is an arbitrary non-negative number.

Now we consider the case $\left\| \lambda \right\| \to +\infty.$ By (\ref{den1}) and  the asymptotic property $K_\mu(t)={\cal O}\left(e^{-t}\right),$ $t\to +\infty,$ see \cite[(8.451.6)]{gra}, we obtain $f(\left\| \lambda \right\|)={\cal O}\left(\left\| \lambda \right\|^{-d}\right),$ when $\left\| \lambda \right\| \to +\infty.$ Thus, by Remark~\ref{rem6}  Assumption~{\rm\ref{ass2}} holds true for the Cauchy model.  

If ${d}/{2}-\alpha<2,$ then $q$ can be chosen close to ${d}/{2}-\alpha$ and  (\ref{kappa_1}) holds. For $q=2$ it follows from (\ref{up0}) and (\ref{up1}) that the inequality $2\upsilon(1-\gamma)-2\beta<2q-2\beta$ holds true for all $\upsilon.$ Hence, by the definition of $\varkappa_0,$ we can choose  $\varkappa_1$ as in (\ref{kappa_1}).
\end{example}
\begin{example}
Let us consider  $\eta (x),$ $x\in\mathbb{R}^d,$ with the covariance function of the form
\[
B\left( \left\Vert x\right\Vert\right) =\left( 1+\left\Vert x\right\Vert^{{}\sigma}\right)
^{-\theta},\quad \sigma \in \left( 0,2\right] ,\ \theta >0,
\]
which is known as the generalized Linnik covariance function \cite{erg,kot,lim1}. Cauchy and Linnik's fields are important  particular cases of this model.

The Cauchy model with $\sigma=2$ was considered in Example~\ref{ex3}. Therefore, we will only investigate  the case $\sigma \in (0,2)$ and $\theta >0.$ Note, that the asymptotic expansion of $f\left( \left\| \lambda \right\|\right)$ for $\sigma \in (0,2)$ differs from the expansion for $\sigma =2.$ That is why we consider these two cases of the generalized Linnik model separately.  

For $\sigma \in (0,2)$ the spectral density  has the form, see \cite[Proposition 2.4]{lim1}, 
\[
f\left( \left\| \lambda \right\|\right) =-\frac{\left\| \lambda \right\|^\frac{2-d}{2}}{2^\frac{d-2}{2}\pi^\frac{d+2}{2} }\operatorname{Im}
\int_{0}^{\infty }\frac{K_\frac{d-2}{2}(\left\| \lambda \right\|u)\,u^\frac{d}{2}\,\mathrm{d}u}{\left( 1+e^{i\pi \sigma
/2}u^\sigma\right)^{\theta}}.
\]

Analogously to the case of the Cauchy field,  $\alpha=\theta\sigma,$ $\theta\sigma<d/2,$  $L(t)=\left( 1+t^{-\sigma}\right)^{-\theta},$ and the generalized Linnik covariance function satisfies Assumption~\ref{ass1}.   By Remark~\ref{rem5} the condition~(\ref{gr}) holds true with $q<\min(\sigma,{d}/{2}-\alpha).$

Note that
\[
f(\left\| \lambda \right\| )=  c_2(d,\theta\sigma )\left\| \lambda \right\| ^{\theta\sigma
-d}\left( 1+\left\| \lambda \right\|^{\sigma}\right)^{-\theta}+\varepsilon(\left\| \lambda \right\|),\]
\[\varepsilon(\left\| \lambda \right\|)=f(\left\| \lambda \right\| )-c_2(d,\theta\sigma )\left\| \lambda \right\| ^{\theta\sigma
-d}\left( 1+\left\| \lambda \right\|^{\sigma}\right)^{-\theta},\]
\[\left( 1+\left\| \lambda \right\|^{\sigma}\right)^{-\theta}=1-\theta\left\| \lambda \right\|^{\sigma}+\frac{\theta(\theta+1)}{2}\left\| \lambda \right\|^{2 \sigma}+...\,, \quad \left\| \lambda \right\|\to 0.\]

By the asymptotic expansions of $f(\left\| \lambda \right\| ),$ see \cite[Proposition 3.9]{lim1}, we obtain
\[f(\left\| \lambda \right\| )=\frac{1}{2^{d}\pi^\frac{d}{2} \Gamma(\theta)}\left(\frac{\Gamma\left(\theta\right)\Gamma\left(\frac{d-\sigma\theta}{2}\right)}{\Gamma\left(\frac{\sigma\theta}{2}\right)} \left(\frac{\left\| \lambda \right\|}{2}\right)^{\sigma\theta-d}-(1-\chi_{\raisebox{-3pt}{\scriptsize $d+2\mathbb{N}_0$}}(\sigma\theta+\sigma))
\right.\]
\[\times \frac{\Gamma\left(\theta+1\right)\Gamma\left(\frac{d-\sigma\theta-\sigma}{2}\right)}{\Gamma\left(\frac{\sigma(\theta+1)}{2}\right)} \left(\frac{\left\| \lambda \right\|}{2}\right)^{\sigma\theta-d+\sigma}+...+(1-\chi_{\raisebox{-3pt}{\scriptsize $\mathbb{N}_0$}}\left({d}/{\sigma}-\theta\right))
\frac{2\Gamma\left(\frac{d}{\sigma}\right)\Gamma\left(\frac{\sigma\theta-d}{\sigma}\right)}{\sigma\Gamma\left(\frac{d}{2}\right)}+...
\]
\[+\left.\frac{(-1)^{\frac{\sigma\theta+\sigma-d}{2}}\Gamma\left(\theta+1\right)}{\left(\frac{\sigma\theta+\sigma-d}{2}\right)!\cdot\Gamma\left(\frac{\sigma(\theta+1)}{2}\right)} \chi_{\raisebox{-3pt}{\scriptsize $d+2\mathbb{N}_0$}}(\sigma\theta+\sigma) \ln\left(\left\| \lambda \right\|\right) \left(\frac{\left\| \lambda \right\|}{2}\right)^{\sigma\theta+\sigma-d}+...\right)\,,\]
where $d+2\mathbb{N}_0:=\{m:\, m=d+2j,\, j\in \mathbb{N}_0\}.$

Therefore,   $\varepsilon(t)=t^{2\theta
-d}L\left( 1/t\right)\cdot{\cal O}\left(t^\upsilon\right),$ $t\to 0,$ when  $\sigma\theta<d/2$ and
\begin{equation}\label{up2}\upsilon=
\begin{cases}d-\sigma\theta,&\text{if}\   \sigma\in(d/(\theta+1),2),\  \theta\in(0,1),\ d=1,2,3;\\
\sigma-\delta,&\text{if}\  \sigma=d/(\theta+1),\  \theta\in(\max(0,d/2-1),1),\ d=1,2,3;\\
\sigma,& \text{if}\ d\ge 4,\ \text{or}\ \sigma\in(0,d/(\theta+1))\  \text{and}\ d=1,2,3,
\end{cases}
\end{equation}
where $\delta$ is an arbitrary non-negative number.

Now we consider the case $\left\| \lambda \right\| \to +\infty.$ By \cite[Proposition 3.4]{lim1} we get $f(\left\| \lambda \right\| )={\cal O}\left( \left\| \lambda \right\|^{-d-\sigma}\right),$ $\left\| \lambda \right\|\to +\infty.$ Hence, by Remark~\ref{rem6}  Assumption~{\rm\ref{ass2}} holds true for the generalized Linnik model.

If ${d}/{2}-\alpha<\sigma,$ then $q$ can be chosen close to ${d}/{2}-\alpha$ and  (\ref{kappa_1}) holds. For $q=\sigma$  by (\ref{up2})  we obtain that the inequality $2\upsilon(1-\gamma)-2\beta<2q-2\beta$ holds for all $\upsilon.$ Hence, it follows from the definition of $\varkappa_0$ that  $\varkappa_1$ can be given by (\ref{kappa_1}).

Figure~\ref{fig2} displays  the graphs of $\frac{\alpha(d-2\alpha)}{3(d-\alpha)}$ and $\varkappa_1/3$ for $d=2,$ $q=\sigma=7/4,$  $\upsilon=17/12,$ $\theta=1/3,$ and 
 $d=3,$ $q=\upsilon=\sigma=1,$ plotted as  functions of the variable $\alpha=\theta.$ Notice, that contrary to Example~\ref{ex6} and the case $d=2$ the order $\varkappa_1/3$ is smaller than  $\frac{\alpha(d-2\alpha)}{3(d-\alpha)}$ for $d=3$ and the values of $\alpha$ close to $d/2.$
\begin{figure}[h]
\begin{minipage}{8cm}
\includegraphics[width=1\linewidth,height=0.8\linewidth]{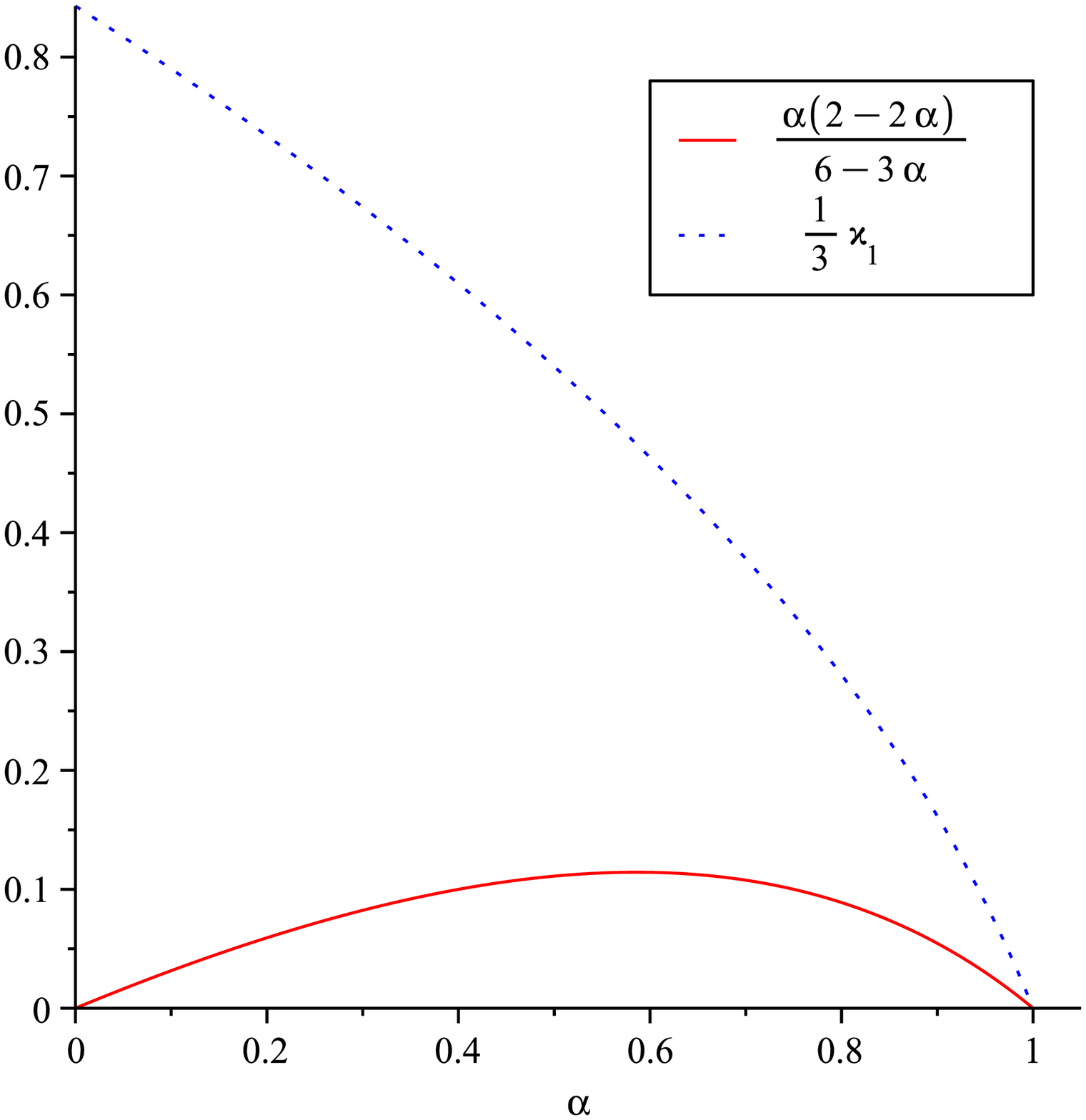}
 \end{minipage} \
\begin{minipage}{8cm}
\includegraphics[width=1\linewidth,height=0.8\linewidth]{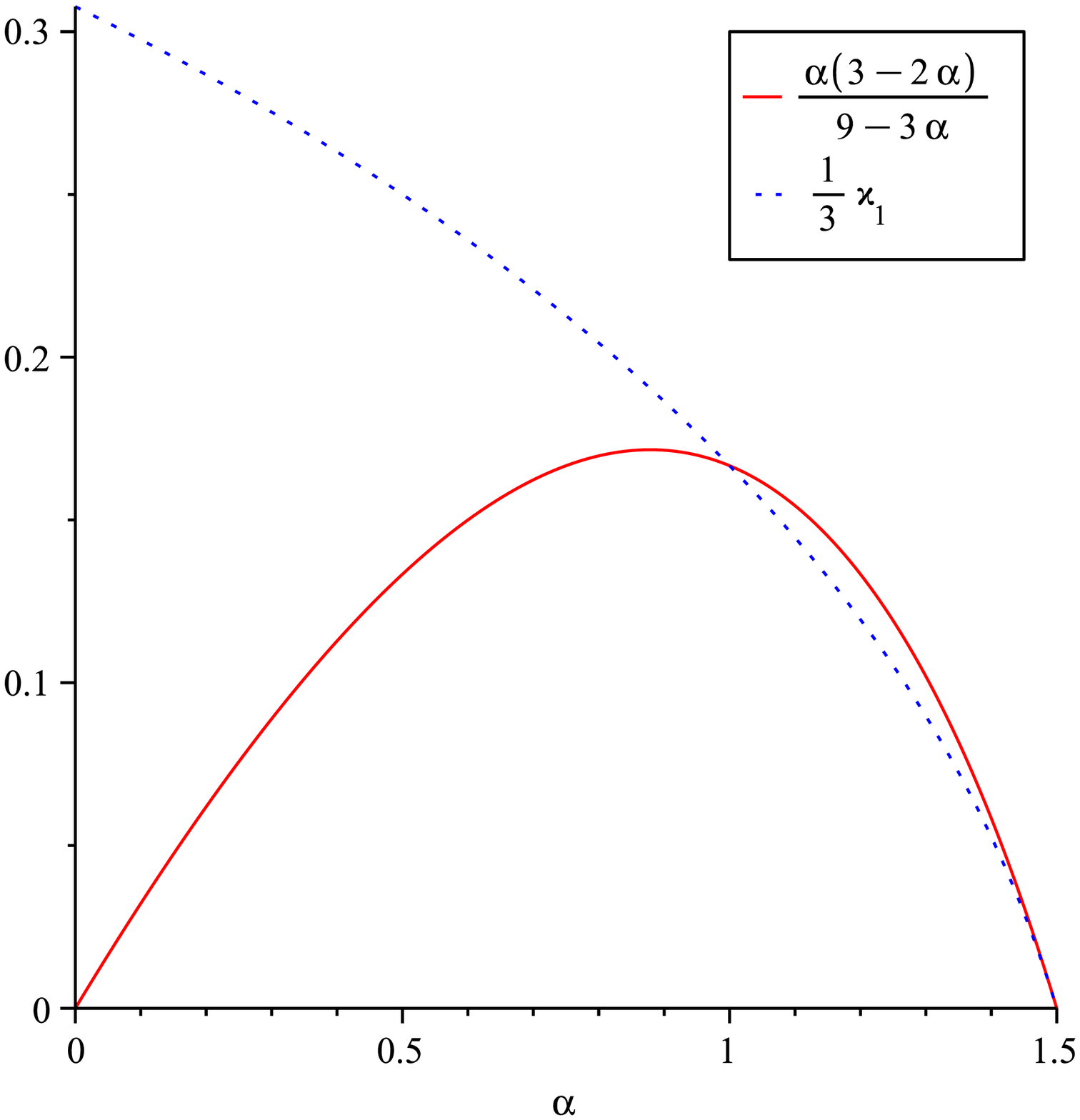}
\end{minipage}
\caption{Graphs of $\varkappa_1/3$ and $\frac{\alpha(d-2\alpha)}{3(d
-\alpha)}$ for $d=2$  and 3.}
\label{fig2}
\end{figure}
\end{example}

\begin{rem}
Due to the strict inequality for $\varkappa$ in Theorem~\ref{rate} the constant $\delta,$ appearing in expressions (\ref{up1}) and (\ref{up2}) for $\upsilon,$ can be chosen equal to zero.
\end{rem}

\section{Concluding Remarks}\label{sec6}
We have investigated the rate of convergence to the Rosenblatt-type limit distributions in the non-central limit theorem. The results were obtained under rather general assumptions allowing specifications for various scenarios.  In particular, special attention was devoted to the Cauchy, generalized Linnik, and local-global distinguisher random processes and fields.  
We use direct analytical probabilistic methods which have, in our view, an independent interest as an alternative to methods in \cite{bre,nor,nor1}.
The analysis and the approach to the rate of convergence in non-central limit theorems are new and extend the investigations of the rate of convergence in central limit theorems in the current literature.

In the one-dimensional case the rate of convergence of $K_{r,2}$ obtained in the proof of Theorem~\ref{rate} is analogous to  the result for the discrete fractional Gaussian noise in \cite{bre}. However, Theorem~\ref{rate} additionally estimates the rate of the term $S_r,$ which allows to consider the class of all functions of Hermite rank 2.  Moreover, the obtained results are valid for the multidimensional case and more general classes of covariance functions and random processes. 

It is possible to extend the results to wider classes of slowly varying functions with remainder, see \cite[\S 3.12]{bin}, whose bounds are different from (\ref{gr}) (the detailed discussion on the condition (\ref{gr}) is given in Remarks~\ref{rem5} and ~\ref{rem6}). However, for such classes the rate of convergence would be different (depending on the remainder) from the following results.  Assumption~\ref{ass2} was chosen to ensure a polynomial convergence rate.

\section{Acknowledgements }
This work was partly supported by La Trobe University DRP Grant in Mathematical and Computing
Sciences. The authors are also grateful for the referee's careful reading of the paper and many detailed comments and suggestions, which helped to improve the paper.


\begin{thebibliography}{00}

\bibitem{bin} N.H. Bingham, C.M. Goldie, J.L. Teugels,    {Regular Variation,} Cambridge University Press, Cambridge, 1987.

\bibitem{bra} L. Brandolini, S. Hofmann,  A. Iosevich,  Sharp rate of average decay of the Fourier transform of a bounded set,   {Geom. Funct. Anal.} 13(4) (2003) 671--680.

\bibitem{bre} J.-C. Breton, I. Nourdin,  Error bounds
on the non-normal approximation of Hermite power variations of fractional Brownian motion,  Electron. Commun. Probab. 13 (2008) 482--493.

\bibitem{bre1} J.-C. Breton, On the rate of convergence in non-central asymptotics of the Hermite variations of fractional Brownian sheet,
Probab. Math. Stat., 31(2)  (2011) 301--311. 

\bibitem{dav} Yu.A. Davydov,  On distributions of multiple Wiener-It\^{o} integrals,  {Theory Probab. Appl.} 35(1) (1990) 27--37. 

\bibitem{dob} R.L. Dobrushin,  P. Major,  Non-central limit theorems for nonlinear functionals of Gaussian fields,  {Z. Wahrsch. Verw. Gebiete.} 50(1)  (1979) 27--52.

\bibitem{dou1} P. Doukhan, G. Oppenheim,  M.S. Taqqu (ed),   {Long-Range Dependence: Theory and Applications,} Birkhauser, Boston, 2003.

\bibitem{erg} M.B. Erdo\u{g}an, I.V. Ostrovskii,  
Analytic and asymptotic properties of generalized Linnik probability densities,  {J. Math. Anal. Appl.} 217(2) (1998) 555--578.

\bibitem{gar} J. Garz\'{o}n, S. Torres,  C.A. Tudor, A strong convergence to the Rosenblatt process, 
J. Math. Anal. Appl. 391(2) (2012) 630--647. 

\bibitem{gne} T. Gneiting, M. Schlather,  
Stochastic models that separate fractal dimension and the Hurst effect,  {SIAM Rev.} 46(2) (2004) 269--282. 

\bibitem{gra} I.S. Gradshteyn,  I.M.  Ryzhik,    {Table of Integrals, Series, and Products,} Academic Press, New York, 2007. 

\bibitem{iv}  A.V. Ivanov, N.N. Leonenko,  {Statistical Analysis of Random Fields,} Kluwer Academic Publishers, Dordrecht, 1989.

\bibitem{iv1}  A.V. Ivanov, N.N. Leonenko, M.D. Ruiz-Medina, I.N.  Savich,  Limit theorems for weighted non-linear transformations of Gaussian processes with singular spectra, {Ann. Probab.} 41(2) 1088--1114. 

\bibitem{kot} S. Kotz, I.V. Ostrovskii, A. Hayfavi, 
Analytic and asymptotic properties of Linnik's probability densities. II, 
 {J. Math. Anal. Appl.} 193(2)  (1995) 497--521.

\bibitem{leo0} N.N. Leonenko, Sharpness of the normal approximation of functionals of strongly correlated Gaussian random fields, Math. Notes. 43(1-2) (1988) 161--171.

\bibitem{leo2} N.N. Leonenko, W.A. Woyczynski,  Exact parabolic asymptotics for singular $n$-D Burgers' random fields: Gaussian approximation. Stochastic Process. Appl. 76(2) (1998) 141--165. 

\bibitem{leo1} N.N. Leonenko,    {Limit  Theorems for Random Fields with Singular Spectrum,}   Kluwer Academic Publishers, Dordrecht, 1999.

\bibitem{anh}  N.N. Leonenko, V. Anh,   Rate of convergence to the Rosenblatt distribution for additive functionals of stochastic processes with long-range dependence,  {J. Appl. Math. Stochastic Anal.}   14(1) (2001) 27--46.

\bibitem{leotau} N. Leonenko, E. Taufer,  
Weak convergence of functionals of stationary long memory processes to Rosenblatt-type distributions,  {J. Statist. Plann. Inference.} 136(4) (2006) 1220--1236.

\bibitem{leoole} N. Leonenko, A. Olenko,   Tauberian and Abelian theorems for long-range dependent random fields, {Methodol. Comput. Appl. Probab.}  15(4) (2013) 715--742.

\bibitem{mink}  N. Leonenko, A. Olenko,  Sojourn measures of Student and Fisher-Snedecor random fields,  {Bernoulli.} 20(3) (2014) 1454--1483. 


\bibitem{lim1} S.C. Lim,  L.P. Teo,   Analytic and asymptotic properties of multivariate generalized Linnik’s probability densities,  {J. Fourier Anal. Appl.}  16 (2010) 715--747.

\bibitem{nor} I. Nourdin,   G. Peccati,  Stein's method on Wiener chaos,   {Probab. Theory Related Fields.} 145(1-2) (2009) 75--118. 

\bibitem{nor1} I. Nourdin,   G. Peccati,  Stein's method and exact Berry-Ess\'{e}en asymptotics for functionals of Gaussian fields,  {Ann. Probab.} 37(6) (2009) 2093--2498.

\bibitem{nor2}  I. Nourdin,  G. Poly,  Convergence in total variation on Wiener chaos,
 {Stochastic Process. Appl.} 123(2) (2013) 651--674.

\bibitem{ole12} A. Olenko,  Tauberian theorems for random fields with an OR spectrum~{\rm II},  {Theory Probab. Math. Stat.} 74 (2006) 81--97.

\bibitem{ole2} A. Olenko,  Limit theorems for weighted functionals of cyclical long-range dependent random fields,  {Stoch. Anal. Appl.} 31(2) (2013) 199--213.

\bibitem{pec} G. Peccati,  M.S. Taqqu,  {Wiener Chaos: Moments, Cumulants and Diagrams: A Survey with Computer Implementation,} Springer, Berlin, 2011.

\bibitem{pet} V.V. Petrov,  {Limit Theorems of Probability Theory.
Sequences of Independent Random Variables,} Claredon Press, Oxford, (1995).

\bibitem{rev} A. R\'eveillac, M. Stauch, C.A. Tudor, Hermite variations of the fractional Brownian sheet,
{Stoch. Dyn.} 12(3), 1150021, (2012) 21 pp.

\bibitem{rob} P.M. Robinson,  
Rates of convergence and optimal spectral bandwidth for long range dependence,
 {Probab. Theory Related Fields} 99(3) (1994) 443--473.

\bibitem{ta0} M.S. Taqqu,  Weak convergence to fractional Brownian motion and to the Rosenblatt process,  {Z. Wahrsch. verw. Gebiete.} 31 (1975) 287--302.

\bibitem{ta2} M.S. Taqqu,  Convergence of integrated processes of arbitrary Hermite rank,  {Z. Wahrsch. Verw. Gebiete.} 50 (1979) 53--83.

\bibitem{ta1} M.S. Taqqu,  The Rosenblatt process, in R. Davis, L. Keh-Shin, D. Politis (Eds.)  {Selected Works of Murray Rosenblatt,}  Springer Verlag, New York, 2011, pp. 29--45.

\bibitem{ta3} M.S. Taqqu, M. Veillette,  Properties and numerical evaluation of the Rosenblatt distribution, {Bernoulli.} 19(3) (2013) 982--1005.

\bibitem{tud} C.A. Tudor,   Analysis of the Rosenblatt process,  {ESAIM Probab. Stat.} 12 (2008) 230--257.

\bibitem{wac} H. Wackernagel,    {Multivariate Geostatistics,} Springer-Verlag, Berlin,  1998.

\bibitem{wol}  {HypergeometricPFQ. Asymptotic series expansions.} Wolfram Function Site, available online from http://functions.wolfram.com/07.22.06.0012.01 (accessed 20 August 2013)

\end{thebibliography}
\end{document}